\newtheorem{thm}{Theorem}[section]
\newtheorem{prop}[thm]{Proposition}
\newtheorem{observation}[thm]{Observation}
\newtheorem*{theorem*}{Theorem}
\newtheorem*{prop*}{Proposition}
\newcommand\ex{\ensuremath{\mathrm{ex}}}
\newcommand\cF{{\mathcal F}}
\newcommand\cH{{\mathcal H}}
\newcommand\cN{{\mathcal N}}
\newcommand{\ignore}[1]{}
\title{On generalized Turán problems with bounded matching number}
\author{D\'aniel Gerbner\footnote{Alfr\'ed R\'enyi Institute of Mathematics, E-mail: \texttt{gerbner@renyi.hu.}}}
\date{}
\begin{document}

\maketitle

\begin{abstract}
Given a graph $H$ and a family of graphs $\cF$, the generalized Tur\'an number $\ex(n,H,\cF)$ is the maximum number of copies of $H$ in an $n$-vertex graphs that do not contain any member of $\cF$ as a subgraph. Recently there has been interest in studying the case $\cF=\{F,M_{s+1}\}$ for arbitrary $F$ and $H=K_r$. We extend these investigations to the case $H$ is arbitrary as well.
\end{abstract}

\section{Introduction}

Given a positive integer $n$ and a graph $F$, the largest number of edges in an $n$-vertex graph that does not contain $F$ as a subgraph is denoted by $\ex(n,F)$ and is called the \textit{Tur\'an number} of $F$. If every graph from a family $\cF$ of graphs is forbidden, then we use the notation $\ex(n,\cF)$. 

The seminal result of this area is due to Tur\'an \cite{T}, who showed that $\ex(n,K_{r+1})=|E(T_r(n))|$, where $T_r(n)$ denotes the Tur\'an graph, i.e., the complete $r$-partite graph with each part of order $\lfloor n/r\rfloor$ or $\lceil n/r\rceil$. Another early result is due to Erd\H os and Gallai \cite{eg}, who showed that for the matching $M_{s+1}$ consisting of $s+1$ independent edges, we have $\ex(n,M_{s+1})=\max \{\binom{2s+1}{2}, \binom{s}{2}+s(n-s)\}$.

The above problems were combined by Alon and Frankl \cite{af}, who showed that for $n\ge 2s+1$ we have $\ex(n,\{K_{r+1},M_{s+1}\})=\max \{\binom{2s+1}{2}, |E(T_r(s))|+s(n-s)\}$. Gerbner \cite{ger} considered $\ex(n,\{F,M_{s+1}\})$ in general, and determined its value apart from a constant additive term. Here and everywhere in this paper $n$ goes to infinity, while other parameters are considered constant.

Let $\cH(F)$ denote the family of graphs obtained by deleting an independent set from $F$
and $\cH^*(F)$ denote the family of graphs obtained by deleting an independent set of order $\alpha(F)$ from $F$. 
If $F$ is bipartite, then let $p=p(F)$ denote the smallest possible order of a color class in a proper two-coloring of $F$.

\begin{thm}[Gerbner \cite{ger}]\label{k2}
   \textbf{(i)} If $\chi(F)>2$ and $n$ is large enough, then $\ex(n,\{F,M_{s+1}\})=\ex(s,\cH(F))+s(n-s)=s(n-s)+O(1)$.

   \textbf{(ii)} Let $\chi(F)=2$ and $n$ large enough. If $p(F)>s$ then $\ex(n,\{F,M_{s+1}\})=\ex(n,M_{s+1})=|E(K_s+\overline{K_{n-s}})|=s(n-s)+\binom{s}{2}$. If $p(F)\le s$, then $\ex(n,\{F,M_{s+1}\})=(p(F)-1)n+O(1)$.
\end{thm}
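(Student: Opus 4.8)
The plan is to match explicit extremal constructions with upper bounds coming from a maximum-matching decomposition refined by the forbidden graph $F$. For the lower bounds: in case (i) I take $G=G_0+\overline{K_{n-s}}$, where $G_0$ is an $s$-vertex graph with $\ex(s,\cH(F))$ edges and no subgraph in $\cH(F)$. The $s$ vertices of $G_0$ cover every edge, so $\nu(G)\le s$ and $G$ is $M_{s+1}$-free; and a copy of $F$ in $G$ would send the vertices landing in $\overline{K_{n-s}}$ to an independent set $I$ of $F$, leaving a copy of $F-I\in\cH(F)$ inside $G_0$, which is impossible. This graph has exactly $\ex(s,\cH(F))+s(n-s)$ edges. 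In case (ii) with $p(F)>s$ I use $K_s+\overline{K_{n-s}}$, which is $F$-free since a copy of $F$ would place a vertex cover of $F$ of size at most $s$ inside the clique, contradicting $p(F)>s$. With $p(F)\le s$ I instead use the complete bipartite graph $K_{p-1,n-p+1}$: its matching number is $p-1\le s$, it avoids $F$ because $F\not\subseteq K_{p-1,m}$ when the smaller color class of $F$ has size $p$, and it has $(p-1)n+O(1)$ edges.

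For the upper bound I fix a maximum matching $M$, set $U=V(M)$ (so $|U|=2\nu\le 2s$) and $W=V(G)\setminus U$; by maximality $W$ is independent, so every edge of $G$ meets $U$. Call $v\in U$ \emph{rich} if it has more than $|V(F)|$ neighbours in $W$, and let $R$ be the set of rich vertices. An augmenting-path argument bounds $|R|$: if $xy\in M$ with $x,y$ both rich, then $x$ and $y$ have distinct neighbours $w_1\neq w_2$ in $W$ and $w_1xyw_2$ augments $M$, a contradiction; hence each edge of $M$ carries at most one rich endpoint and $|R|\le\nu\le s$. Since the non-rich vertices and the edges inside $U$ together contribute only $O(1)$, this already gives $e(G)\le |R|(n-2\nu)+O(1)=s(n-s)+O(1)$, and in the regime $p(F)>s$ the Erd\H os--Gallai bound $e(G)\le\ex(n,M_{s+1})$ supplies the exact value at once.

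The remaining task, and the crux, is to drive the additive constant down to $\ex(s,\cH(F))$ in case (i) and the leading coefficient down to $p(F)-1$ in case (ii); both rest on a reconstruction lemma. Because each rich vertex is adjacent to all but boundedly many vertices of the huge independent set $W$, any copy of $H\in\cH^*(F)$ on the rich vertices can be completed to a copy of $F$ by adjoining an independent set of size $\alpha(F)$ taken from $W$. Hence $F$-freeness forces $G[R]$ to avoid $\cH^*(F)$, so it spans at most $\ex(s,\cH(F))$ edges; when $\chi(F)=2$ the same lemma, applied to $\overline{K_{p(F)}}\in\cH(F)$, forces $|R|\le p(F)-1$ and yields the coefficient $p(F)-1$. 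I expect the genuine difficulty to lie not in this lemma but in the exact bookkeeping of case (i): one must show that the edges among and from the non-rich vertices of $U$, together with the edges inside $R$, never exceed the budget realised by the extremal graph, so that the total stays at most $\ex(s,\cH(F))+s(n-s)$. This calls for a careful global degree count trading each non-rich vertex's deficit in $W$ against its edges inside $U$, together with a uniform verification that the boundedly many missing neighbours in $W$ can always be avoided when assembling $F$.
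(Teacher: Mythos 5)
Your lower-bound constructions are essentially right ($G_0+\overline{K_{n-s}}$ in case (i), and $K_{p-1,n-p+1}$ when $p(F)\le s$, both with the correct freeness arguments), and the augmenting-path bound $|R|\le\nu\le s$ is fine; note also that the present paper does not prove Theorem \ref{k2} at all but cites it from \cite{ger}, so you are competing with that paper's proof, whose toolkit (the Berge--Tutte decomposition, as used throughout Section \ref{ketto}, giving a set $B$ of at most $s$ vertices covering all but $O(1)$ edges) is strictly stronger than your maximum-matching set $U$ of size up to $2s$. The genuine gap is your ``reconstruction lemma''. You define $v$ to be rich if it has \emph{more than} $|V(F)|$ neighbours in $W$, but then use richness as if it meant ``adjacent to all but boundedly many vertices of $W$''. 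These are very different: rich vertices may have pairwise disjoint neighbourhoods in $W$, in which case common neighbourhoods are empty and no copy of a graph in $\cH^*(F)$ on $R$ can be completed to $F$. Concretely, take $F=K_4$, $s=3$, and let $G$ be a triangle $xyz$ where each of $x,y,z$ gets its own private set of many leaves: $G$ is $\{K_4,M_4\}$-free, all of $x,y,z$ are rich, and $G[R]$ contains $K_3\in\cH^*(K_4)$ --- so your claim that $F$-freeness forces $G[R]$ to avoid $\cH^*(F)$ is false for your notion of rich. The same flaw kills the bipartite case: for $F=C_4$ (so $p=2$) and $G$ a union of $s$ disjoint large stars, all $s$ centers are rich, refuting $|R|\le p(F)-1$. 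Repairing this requires stratifying vertices by degree (vertices of degree $\ge n-O(1)$, for which co-degrees are automatically large, versus vertices of intermediate degree), and the intermediate regime is exactly where the real work lies. A second, smaller error in the same lemma: avoiding $\cH^*(F)$ would only bound $e(G[R])$ by $\ex(|R|,\cH^*(F))$, not by $\ex(s,\cH(F))$; since $\cH^*(F)\subseteq\cH(F)$, one has $\ex(s,\cH(F))\le\ex(s,\cH^*(F))$, so you must exclude \emph{every} member of $\cH(F)$ and also handle $|R|<s$ (compare the role of $\ex(s-1,\cH(H),\cH(F))$ in Proposition \ref{usi}).

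Two further points. First, in case (ii) with $p(F)>s$ your $F$-freeness argument for $K_s+\overline{K_{n-s}}$ is fallacious: the vertices of a copy of $F$ landing in the clique form a vertex cover of $F$, and a cover of size at most $s$ does not contradict $p(F)>s$, because $\tau(F)\le p(F)$ can be strict --- a double star (two adjacent centers, each with $t$ leaves) has $\tau=2$ while $p=t+1$ is arbitrarily large. What this paper actually invokes at the corresponding point (proof of Proposition \ref{main}(ii)) is that $p(F)>s$ forces $F\supseteq M_{s+1}$, which makes $F$-freeness vacuous in any $M_{s+1}$-free graph; by K\H onig this amounts to $\nu(F)=\tau(F)>s$ and in any case needs a genuine argument, not your covering inference. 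Second, even granting the lemma, your proof stops where the theorem begins: statement (i) asserts the \emph{exact} value $\ex(s,\cH(F))+s(n-s)$ for large $n$, and (ii) the exact leading coefficient $p(F)-1$, whereas your accounting only delivers $sn+O(1)$; the ``careful global degree count'' you defer is not bookkeeping but the entire content of the result.
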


Given a positive integer $n$ and two graphs $H$ and $F$, the largest number of copies of $H$ in $n$-vertex $F$-free graphs is denoted by $\ex(n,H,F)$. If every graph from a family $\cF$ is forbidden, we use the notation $\ex(n,H,\cF)$. After several sporadic results (see e.g. \cite{gypl,zykov}), the systematic study of these functions was initiated by Alon and Shikhelman \cite{ALS2016}.

Let $\cN(H,G)$ denote the number of copies of $H$ in $G$.
Zykov \cite{zykov} showed that $\ex(n,K_k,K_{r+1})=\cN(K_k,T_r(n))$.
Wang \cite{wang} determined $\ex(n,K_k,M_{s+1})$. 

Ma and Hou \cite{mh} combined the above problems and studied $\ex(n,K_k,\{F,M_{s+1}\})$. They showed the following. 

\begin{thm}[Ma and Hou \cite{mh}]\label{kk}

       \textbf{(i)} If $\chi(F)>2$, then $\ex(n,K_k,\{F,M_{s+1}\})=\ex(s,K_{k-1},\cH(F))n+O(1)$.

   \textbf{(ii)} Let $\chi(F)=2$ and $n$ be large enough. If $p(F)>s$ then $\ex(n,K_k,\{F,M_{s+1}\})=\ex(n,K_k,M_{s+1})=\cN(K_k,K_s+\overline{K_{n-s}})$. If $p(F)\le s$, then $\ex(n,K_k\{F,M_{s+1}\})=\binom{p(F)-1}{k-1}n+O(1)$.
\end{thm}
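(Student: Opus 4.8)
The plan is to establish matching lower and upper bounds, organized by $\chi(F)$ exactly as in Theorem~\ref{k2}, but now counting copies of $K_k$ rather than edges. For the lower bounds I would reuse the extremal constructions behind Theorem~\ref{k2}. In case (i), take a graph $G_0$ on $s$ vertices that is $\cH(F)$-free and attains $\ex(s,K_{k-1},\cH(F))$ copies of $K_{k-1}$, and set $G=G_0\vee\overline{K_{n-s}}$. Every edge of $G$ meets $V(G_0)$, so $\nu(G)\le s$ and $G$ is $M_{s+1}$-free; moreover $G$ is $F$-free, since in any copy of $F$ the vertices landing in the independent part $\overline{K_{n-s}}$ form an independent set of $F$, and deleting them leaves a subgraph of $G_0$ that is a member of $\cH(F)$, which is excluded. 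As $\overline{K_{n-s}}$ is independent, each $K_k$ uses at most one of its vertices, so $\cN(K_k,G)=\cN(K_{k-1},G_0)(n-s)+O(1)=\ex(s,K_{k-1},\cH(F))\,n+O(1)$. The bipartite constructions of Theorem~\ref{k2}(ii) are treated identically, counting $K_k$'s instead of edges.

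For the upper bounds the crucial input is a structural reduction that is available because $n\to\infty$ while $s$ is fixed. Given an $\{F,M_{s+1}\}$-free graph $G$, I would invoke the Gallai--Edmonds decomposition: from $\nu(G)\le s$ one obtains a set $A$ with $|A|\le s$, a set $B$ of only $O(1)$ vertices (the set $C$ together with the non-trivial factor-critical components), and a large independent set $W$ of vertices whose neighbourhoods lie entirely inside $A$. Since $W$ is independent and $N(w)\subseteq A$ for each $w\in W$, no $K_k$ can use two vertices of $W$, nor a vertex of $W$ together with one of $B$; consequently
\[
\cN(K_k,G)=\cN(K_k,G[A\cup B])+\sum_{w\in W}\cN\bigl(K_{k-1},G[N(w)\cap A]\bigr),
\]
where the first term is $O(1)$ because $|A\cup B|=O(1)$.

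To control the sum I would group the vertices of $W$ by their trace $X=N(w)\cap A\subseteq A$; there are only $2^{|A|}=O(1)$ possible traces. Call $X$ \emph{popular} if it occurs for at least $\alpha(F)$ vertices of $W$, so that non-popular traces together contribute only $O(1)$. For a popular $X$, the associated $w$'s give at least $\alpha(F)$ independent vertices each complete to $X$; if $G[X]$ contained any member $F-I$ of $\cH(F)$, then embedding $F-I$ into that copy and sending the independent set $I$ (of size $|I|\le\alpha(F)$) into these $w$'s would realize $F$ in $G$, a contradiction. Hence $G[X]$ is $\cH(F)$-free, which in case (i) yields $\cN(K_{k-1},G[X])\le\ex(|X|,K_{k-1},\cH(F))\le\ex(s,K_{k-1},\cH(F))$ using $|X|\le|A|\le s$. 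In case (ii) one observes that $\cH(F)$ contains the edgeless graph $\overline{K_{p(F)}}$ (delete a larger colour class of a $2$-colouring realizing $p(F)$), so $\cH(F)$-freeness of $G[X]$ forces $|X|\le p(F)-1$ and thus $\cN(K_{k-1},G[X])\le\binom{p(F)-1}{k-1}$; when $p(F)>s$ the matching bound $|X|\le s$ dominates and the construction $K_s\vee\overline{K_{n-s}}$ (which is $F$-free precisely because $p(F)>s$) together with Wang's theorem \cite{wang} pins the value at $\cN(K_k,K_s\vee\overline{K_{n-s}})=\ex(n,K_k,M_{s+1})$. Summing over $W$ gives the stated upper bounds.

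The main obstacle is extracting the exact leading coefficient rather than merely the order $O(n)$. Two points demand care. First, the naive decomposition from a maximum matching only bounds the relevant core by $2s$ vertices, which would give $\ex(2s,\cdot\,)$; obtaining the sharp coefficient $\ex(s,\cdot\,)$ in case (i) is exactly what the Gallai--Edmonds refinement ($|A|\le s$) provides, so that step must be carried out rigorously. Second, the equivalence between a trace being popular and its inducing an $\cH(F)$-free subgraph is where the $F$-freeness is actually used, and the popularity threshold (here $\alpha(F)$) must be tuned so the embedding argument succeeds while the non-popular traces remain absorbed into the $O(1)$ error. Finally, matching the additive term in the $p(F)>s$ subcase to the exact value of Wang's theorem, and verifying that the lower-bound constructions are genuinely $F$-free, completes the argument.
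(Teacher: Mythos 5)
First, a point of reference: the paper does not prove this statement at all --- Theorem \ref{kk} is quoted from Ma and Hou \cite{mh} --- so your attempt can only be judged on its own merits and against the machinery the paper uses for its generalizations (Observation \ref{obsi}, Propositions \ref{matc} and \ref{main}). Your case (i) is essentially that standard route and is sound in outline: the Gallai--Edmonds/Berge--Tutte structure (bounded core $A\cup B$, large independent $W$ with traces in $A$), the grouping by traces, and the completion of a member $F-I\in\cH(F)$ found in $G[X]$ by sending $I$ to independent vertices complete to $X$ are exactly the arguments that appear (for general $H$) in the paper's Section \ref{ketto}. One step you state without justification is $\ex(|X|,K_{k-1},\cH(F))\le\ex(s,K_{k-1},\cH(F))$: this monotonicity is not automatic, because members of $\cH(F)$ may themselves contain isolated vertices, so adding isolated vertices to an $\cH(F)$-free graph can create a member; this needs (and admits) an argument, but flag it.

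The genuine gap is in case (ii), in the lower bounds. Containment of a bipartite $F$ in $K_q+\overline{K_m}$ for large $m$ is governed by whether some member of $\cH(F)$ embeds in $K_q$ --- by K\H onig's theorem essentially by the vertex cover number $\tau(F)$ --- and \emph{not} by $p(F)$. These parameters differ: for the double star $F=S_{2,2}$ (two adjacent centers, each with two pendant leaves) one has $p(F)=3$ but $\tau(F)=2$, so $F\subseteq K_2+\overline{K_{n-2}}$. Hence your claim that $K_s+\overline{K_{n-s}}$ ``is $F$-free precisely because $p(F)>s$'' is false reasoning (take $s=2$ for this $F$), and in the subcase $p(F)\le s$ the assertion that the bipartite constructions of Theorem \ref{k2}(ii) ``are treated identically'' breaks down: the edge-extremal construction there is of the type $K_{p-1,n-p+1}$, which contains no $K_k$ whatsoever for $k\ge 3$, while the clique-friendly join $K_{p(F)-1}+\overline{K_{n-p(F)+1}}$ need not be $F$-free (the same double star embeds in $K_2+\overline{K_{n-2}}$). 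This is not merely a presentational gap: for $F=S_{2,2}$ and $k=3$, in any $F$-free graph every edge lies in at most three triangles, and the Berge--Tutte structure then forces $O(1)$ triangles under the matching constraint, so the coefficient $\binom{p(F)-1}{k-1}$ cannot be attained by any construction for such $F$. The correct leading coefficient in the bipartite case is the analogue of case (i), namely the maximum number of $K_{k-1}$'s in an $\cH(F)$-free graph on at most $s$ vertices, which equals $\binom{p(F)-1}{k-1}$ only when $K_{p(F)-1}$ is itself $\cH(F)$-free; the subtlety of this subcase is exactly what the paper warns about in the discussion following Proposition \ref{main}. Your upper-bound argument in (ii) silently makes the same conflation in reverse (it only uses $\overline{K_{p(F)}}\in\cH(F)$, ignoring the sparser members of $\cH(F)$ that can further restrict $G[X]$), so both directions of the $p(F)\le s$ subcase need to be redone around $\ex(s,K_{k-1},\cH(F))$ rather than $p(F)$.
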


A small improvement was obtained in \cite{zc}. One can say that the above theorem determines $\ex(n,K_k,\{F,M_{s+1}\})$ apart from an additive error term $O(1)$. Note however that the main term depends on $\ex(s,K_{r-1},\cH(F))$, which may be unknown (unlike in Theorem \ref{k2}).
Nevertheless, we will use the phrase ``determine'' if the bound only depends on the solution of a generalized Tur\'an problem on $O(1)$ vertices.

In this paper we will study $\ex(n,H,\{F,M_{s+1}\})$ for arbitrary $F$ and $H$. Before turning to that problem, let us discuss what we know about $\ex(n,H,M_{s+1})$ in general.

The order of magnitude of $\ex(n,H,M_{s+1})$ was determined in \cite{ger6} for every graph $H$. Given a graph $G$ and a set $U\subset V(G)$, the \textit{partial $(m,U)$-blowup of $G$} is obtained the following way. We replace each vertex $u\in U$ with $m$ vertices $u_1,\dots,u_m$ and replace each vertex $u\not\in U$ with one vertex $u_1$. Then we replace each edge $uv$ by the edges $u_iv_j$ for each $i,j$, thus we replace each edge by 1, $m$ or $m^2$ edges. Let us consider a largest set $U\subset V(H)$ such that no partial $(m,U)$-blowup of $H$ contains $M_{s+1}$, and let $b(H)=b(H,s)$ denote the order of $U$. A theorem in \cite{ger6} shows that $\ex(n,H,M_{s+1})=\Theta(n^{b(H)})$.

It is easy to see that in our case $b(H)\le\alpha(H)$ (recall that $\alpha(H)$ is the order of a largest independent set in $H$). Indeed, if we blow up both endpoints of an edge, then the resulting graph contains $M_m$. 

\smallskip

We state and prove our results in Section \ref{ketto}. Our main results are the following. We determine the order of magnitude of $\ex(n,H,\{F,M_{s+1}\})$ for every $H$, $F$ and $s$. For most graphs $F$ (including each non-bipartite graph), we determine $\ex(n,H,\{F,M_{s+1}\})$ apart from an additive error term $O(n^{\alpha(H)-1})$ for every $H$ and $s$. We answer a question of Ma and Hou \cite{mh} by showing that in the case $\chi(F)>2$, the $O(1)$ error term in Theorem \ref{kk} may be necessary even if $\ex(s,K_{k-1},\cH(F))>0$. Moreover, we provide a characterization in our more general setting, i.e., we show when the error term $O(n^{\alpha(H)-1})$ can be omitted from our result. 

Ma and Hou \cite{mh} determined $\ex(n,K_k,\{K_{r+1},M_{s+1}\}$ for each value of $n,k,r,s$. The extremal graph is either $T_r(2s+1)$ or obtained from $T_{r-1}(s)$ by adding $n-s$ vertices, each joined to each of the $s$ old vertices. We extend their result to counting any complete multipartite graph with at most $r$ parts. The extremal graphs are the same, except the Tur\'an graph is replaced by a not necessarily balanced complete $r$-partite graph in both cases.

\section{Results and proofs}\label{ketto}

We will use the following theorem of Berge and Tutte \cite{berg}.

\begin{thm}[Berge-Tutte]  A graph $G$ is $M_{s+1}$-free if and only if there is a set $B\subset V(G)$ such that removing $B$ cuts $G$ to connected components $G_1,\dots,G_m$ with each $A_i=V(G_i)$ of odd order such that $|B|+\sum_{i=1}^m \frac{|A_i|-1}{2}\le s$.
\end{thm}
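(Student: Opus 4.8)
The plan is to split the biconditional into its two implications: the existence of such a set $B$ easily forces a small matching, while the reverse implication is the content of the Tutte--Berge formula and is where essentially all the work lies.

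\textbf{Sufficiency.} Suppose $B$ exists with the stated properties, and let $M$ be any matching of $G$. Because $G_1,\dots,G_m$ are exactly the connected components of $G-B$, every edge of $G$ either meets $B$ or lies inside a single $G_i$; in particular this holds for the edges of $M$. At most $|B|$ edges of $M$ meet $B$, since each vertex of $B$ lies in at most one matching edge. Inside a fixed $G_i$, whose order $|A_i|$ is odd, a matching uses at most $\lfloor |A_i|/2\rfloor=\frac{|A_i|-1}{2}$ edges. As these two types of edges are disjoint, $|M|\le |B|+\sum_{i=1}^m \frac{|A_i|-1}{2}\le s$, so $G$ contains no $M_{s+1}$.

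\textbf{Necessity.} For a set $U\subseteq V(G)$ write $o(G-U)$ for the number of odd-order components of $G-U$, and set $g(U)=\tfrac12\big(|V(G)|+|U|-o(G-U)\big)$. A short computation shows $g(U)=|U|+\sum_C \lfloor |C|/2\rfloor$, the sum running over all components $C$ of $G-U$, and that $g(U)$ equals the target quantity $|U|+\sum_i\frac{|A_i|-1}{2}$ precisely when every component of $G-U$ is odd. Thus it suffices to produce a set $U$ with all components odd and $g(U)\le s$. I would first establish the deficiency form of the Tutte--Berge formula, namely that the number of vertices missed by a maximum matching of $G$ equals $\max_U\big(o(G-U)-|U|\big)$, equivalently $\min_U g(U)=\nu(G)$ for the matching number $\nu(G)$. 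The cleanest route is to derive this from Tutte's $1$-factor theorem: set $d=\max_U(o(G-U)-|U|)$, note $d\equiv |V(G)|\pmod 2$, adjoin $d$ new apex vertices each adjacent to everything, and verify Tutte's condition for the enlarged graph (the only nontrivial case is when the removed set contains all apices, where it reduces to the definition of $d$). The resulting perfect matching restricts to a matching of $G$ missing at most $d$ vertices, giving $\nu(G)\ge\tfrac12(|V(G)|-d)$, and the sufficiency bound above supplies the reverse inequality.

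Finally I would upgrade the Tutte--Berge minimizer to one with only odd components. Starting from any $U$ with $g(U)=\nu(G)\le s$, while some component $C$ of $G-U$ has even order I replace $U$ by $U\cup\{v\}$ for an arbitrary $v\in C$. Since $C-v$ has odd order $|C|-1$ and $\lfloor a/2\rfloor+\lfloor b/2\rfloor\le\lfloor(a+b)/2\rfloor$, the floor-sum over the new components of $C-v$ drops by at least one while $|U|$ grows by one, so $g$ does not increase. As $|U|$ strictly increases and is bounded by $|V(G)|$, the process terminates with a set $B$ all of whose components are odd and with $g(B)\le s$, which is exactly the set sought. The genuine difficulty here is the Tutte--Berge step, which ultimately rests on Tutte's theorem; the sufficiency direction and the parity/floor bookkeeping in the all-odd reduction are routine once that formula is in hand.
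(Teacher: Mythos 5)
Your proof is correct, but note that the paper does not prove this statement at all: it is quoted as a known result with a citation to Berge, so there is no internal argument to compare against. Your write-up is a sound self-contained derivation of the standard Tutte--Berge machinery: the sufficiency count is right, the identity $g(U)=|U|+\sum_C\lfloor |C|/2\rfloor$ checks out, the apex-vertex reduction to Tutte's $1$-factor theorem is the classical route (the parity observation $d\equiv|V(G)|\pmod 2$ correctly handles the case $S=\emptyset$), and your final refinement step, absorbing a vertex of each even component into $U$ while $g$ does not increase, is exactly what is needed to match the paper's formulation, which insists that \emph{all} components be odd --- a detail many textbook statements of Tutte--Berge omit and which you handle cleanly via $\lfloor a/2\rfloor+\lfloor b/2\rfloor\le\lfloor (a+b)/2\rfloor$. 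One small gloss worth making explicit: when you invoke ``the sufficiency bound above'' for the reverse inequality $\nu(G)\le g(U)$, the set $U$ there may have even components, so you need the floor version of the count ($\le\lfloor|C|/2\rfloor$ edges inside each component $C$, $\le|U|$ edges meeting $U$) rather than the literal all-odd bound you proved; the argument is identical, but as written the sufficiency step only covers the all-odd case. With that one-line adjustment the proof is complete, and it buys the paper something it currently outsources: a self-contained justification of its main structural tool.
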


We will use the following simple facts about the partition in the Berge-Tutte theorem.

\begin{observation}\label{obsi}
    Let $H$ be an arbitrary graph and $G$ be an $n$-vertex $M_{s+1}$-free graph. Let $B$ be a subset of $V(G)$ guaranteed by the Berge-Tutte theorem. Then

    \textbf{(i)} Any vertex $v$ outside $B$ is contained in $O(n^{\alpha(H)-1})$ copies of $H$.

    \textbf{(ii)} There are $O(n^{\alpha(H)-1})$ copies of $H$ containing at least one edge outside $B$.
\end{observation}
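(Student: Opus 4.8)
The plan is to read off the coarse structure of $G$ from the Berge--Tutte partition and then carry out a crude but careful vertex-count of the copies of $H$, where the only real idea is an independence-augmentation argument that saves one factor of $n$.

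First I would record the structural consequences of the inequality $|B|+\sum_i \frac{|A_i|-1}{2}\le s$. It forces $|B|\le s$, and since every component $G_i$ with $|A_i|\ge 3$ contributes at least $1$ to the sum, there are at most $s$ such components, each of order at most $2s+1$; writing $L$ for the union of their vertex sets gives $|B\cup L|=O(1)$. The remaining vertices $S:=V(G)\setminus(B\cup L)$ are exactly the singleton components of $G-B$. Two facts about $S$ are all I need: $S$ is an independent set in $G$, and every vertex of $S$ has all of its neighbours inside $B$ (a singleton component of $G-B$ is isolated in $G-B$, so it sends no edge into $S\cup L$).

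Next I set up the counting scheme. For a copy of $H$ I split $V(H)$ according to where its vertices land: let $W$ be the vertices mapped into $B\cup L$ and $V(H)\setminus W$ the vertices mapped into $S$. Because $S$ is independent and a copy of $H$ realises every edge of $H$, the set $V(H)\setminus W$ must be independent in $H$, hence $|V(H)\setminus W|\le\alpha(H)$. For a fixed partition there are at most $|B\cup L|^{|W|}=O(1)$ ways to place $W$ and at most $n^{|V(H)\setminus W|}\le n^{\alpha(H)}$ ways to place the rest; summing over the $O(1)$ partitions of $V(H)$ recovers the trivial global bound $\cN(H,G)=O(n^{\alpha(H)})$. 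The whole point is that in (i) and (ii) an extra constraint lowers this exponent by one.

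The decisive step --- and the main obstacle --- is the independence-augmentation argument that forces $|V(H)\setminus W|\le\alpha(H)-1$ whenever a distinguished vertex or edge of the copy lies outside $S$. For (i) with $v\in S$, the vertex $v$ is one of the $\le\alpha(H)$ vertices mapped into $S$, so at most $\alpha(H)-1$ others remain and the count is $O(n^{\alpha(H)-1})$ directly. For (i) with $v\in L$ (the $O(1)$ exceptional vertices), and for (ii), where the copy uses an edge of $G$ with both endpoints outside $B$ --- such an edge necessarily lies inside $L$ by the two facts above --- let $h$ be a vertex of $H$ mapped to such an out-of-$S$ vertex $v$. If $V(H)\setminus W$ were a maximum independent set $I$ of size $\alpha(H)$, then $h\notin I$, and $h$ could have no neighbour in $I$: a neighbour would be mapped into $S$ and adjacent to $v$, contradicting that $S$-vertices are adjacent only to $B$ while $v\notin B$. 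Hence $I\cup\{h\}$ would be independent of size $\alpha(H)+1$, a contradiction, so $|V(H)\setminus W|\le\alpha(H)-1$ and the count drops to $O(n^{\alpha(H)-1})$. Fixing $v$ (respectively the $O(1)$ choices of edge) and summing over the $O(1)$ choices of $h$ and of the partition keeps everything within $O(n^{\alpha(H)-1})$. I expect the bookkeeping of the partitions and the injectivity to be routine; the augmentation argument is where the statement is actually won.
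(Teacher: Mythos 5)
Your proposal is correct and follows essentially the same route as the paper: both extract from the Berge--Tutte partition that all but $O(1)$ vertices form an independent set of singleton components whose neighbours lie in $B$, deduce that at most $\alpha(H)-1$ vertices of any relevant copy of $H$ can be chosen freely from that set (your augmentation argument for $v\in L$ is just an unpacked version of the paper's remark that these vertices together with $v$ form an independent set in $H$), and derive \textbf{(ii)} from \textbf{(i)} via the $O(1)$ edges outside $B$. The only difference is presentational --- you split \textbf{(i)} into the cases $v\in S$ and $v\in L$ where the paper treats them uniformly --- so nothing further is needed.
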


\begin{proof} To show \textbf{(i)}, observe that the vertices of $G$ form an independent set with $v$ except for $O(1)$ vertices, thus in a copy of $H$ containing $v$ there are $O(1)$ ways to pick each other vertex, except at most $\alpha(H)-1$ vertices that form an independent set with $v$ in $H$.
To show \textbf{(ii)}, observe that there are $O(1)$ edges outside $B$. 
    There are $O(1)$ ways to pick such an edge $uv$, and then there are $O(n^{\alpha(H)-1})$ copies of $H$ that contain $v$.
\end{proof}

We will use the following simple statement.

\begin{prop}\label{matc} If $|V(H)|\le s+\alpha(H)$, then
    $\ex(n,H,M_{s+1})=\cN(H,K_s+\overline{K_{n-s}})$ for sufficiently large $n$. Otherwise $\ex(n,H,M_{s+1})=O(n^{\alpha(H)-1})$.
\end{prop}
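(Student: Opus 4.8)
The plan is to fix, throughout, an $M_{s+1}$-free graph $G$ on $n$ vertices, apply the Berge--Tutte theorem to obtain a set $B$ with $|B|\le s$, and decompose $V(G)=B\sqcup C\sqcup R$, where $R$ collects the singleton components of $G-B$ (an independent set with $|R|=n-O(1)$) and $C$ collects the components of order at least $3$ (so $|C|=O(1)$, using $|B|+\sum_i\frac{|A_i|-1}{2}\le s$). Every edge of $G$ lying outside $B$ is then inside $C$; in particular no vertex of $R$ is adjacent to any vertex of $C$. With this in hand the \emph{otherwise} case is quick: if $|V(H)|>s+\alpha(H)$, then any copy of $H$ with no edge outside $B$ would map the $\ge |V(H)|-\alpha(H)>s\ge |B|$ vertices of its core into $B$, which is impossible, so every copy uses an edge outside $B$ and Observation~\ref{obsi} gives $\ex(n,H,M_{s+1})=O(n^{\alpha(H)-1})$. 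The matching lower bound in the first case is also immediate, since each edge of a matching in $K_s+\overline{K_{n-s}}$ meets the clique $K_s$, so $K_s+\overline{K_{n-s}}$ is $M_{s+1}$-free and $\ex(n,H,M_{s+1})\ge\cN(H,K_s+\overline{K_{n-s}})$.

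The content is the upper bound $\cN(H,G)\le\cN(H,K_s+\overline{K_{n-s}})$ in the case $|V(H)|\le s+\alpha(H)$, which I would prove by isolating the leading term. By Observation~\ref{obsi} I may discard the $O(n^{\alpha(H)-1})$ copies using an edge outside $B$. In a remaining copy the vertices sent outside $B$ are independent in $H$, so at most $\alpha(H)$ of the $h:=|V(H)|$ vertices lie in $R$; as $|C|=O(1)$, any copy with fewer than $\alpha(H)$ vertices in $R$ contributes only $O(n^{\alpha(H)-1})$. Thus the main term counts copies sending a \emph{maximum} independent set $I$ of $H$ into $R$ and the core $V(H)\setminus I$ into $B\cup C$. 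Here each core vertex has a neighbour in $I$ (otherwise $I$ would not be maximum), and since $C$-vertices have no neighbour in $R$, the whole core must in fact land in $B$.

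It then remains to compare, for each maximum independent set $I$ and each edge-respecting injection $\psi$ of the core into $B$, the number of extensions of $\psi$ to an embedding of $I$ into $R$. To leading order this number is $\prod_{v\in I}\bigl|R\cap\bigcap_{u\in N_H(v)}N_G(\psi(u))\bigr|\le|R|^{\alpha(H)}=(1+o(1))\,n^{\alpha(H)}$, so the coefficient of $n^{\alpha(H)}$ in $\cN(H,G)$ is at most (the number of maximum independent sets of $H$) times (the number of edge-respecting injections of the core into $B$). The latter is bounded by the number of injections into a set of size $s$, with equality precisely when $|B|=s$ and $B$ is a clique; this is exactly the contribution of $K_s+\overline{K_{n-s}}$, where moreover $R$ is completely joined to $B$ so that every factor in the product above attains $|R|$. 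Hence the leading coefficient of $\cN(H,G)$ never exceeds that of $\cN(H,K_s+\overline{K_{n-s}})$.

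The main obstacle is to upgrade this leading-order inequality to exact equality for all large $n$. I would do so by a rigidity argument: equality of the leading coefficients forces $|B|=s$, forces $B$ to be a clique (so that all core injections are valid), and then forces every vertex of $B$ to be joined to all of $R$ (so that every factor in the product above equals $|R|$); since $R$ must stay independent, no further edges are available. Finally, among all such graphs the exact count increases with $|R|=n-s-|C|$, so it is strictly largest when $C=\emptyset$, i.e.\ for $G=K_s+\overline{K_{n-s}}$ itself. Every other $M_{s+1}$-free graph is therefore beaten once $n$ is large, either at leading order or, among leading-order ties, at the next order; the delicate point is to make the error terms in all of these estimates uniform in $n$.
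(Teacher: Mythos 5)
Your decomposition, your lower bound, and your treatment of the case $|V(H)|>s+\alpha(H)$ are all correct; in fact your argument there (every copy of $H$ must contain an edge outside $B$, since the vertices mapped outside $B$ form an independent set of $H$ and $|V(H)|-\alpha(H)>s\ge |B|$, then Observation~\ref{obsi}~\textbf{(ii)}) is more self-contained than the paper's, which instead bounds $b(H,s)\le\alpha(H)-1$ via partial blowups and invokes $\ex(n,H,M_{s+1})=\Theta(n^{b(H)})$ from \cite{ger6}. Your identification of the main term (a maximum independent set $I$ into $R$, the core into $B$, using that every core vertex has a neighbour in $I$ and that there are no $C$--$R$ edges) likewise matches the paper's use of Observation~\ref{obsi}.

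The genuine gap is the endgame, and you flag it yourself (``the delicate point''). First, the rigidity claims are false as stated: equality of the $n^{\alpha(H)}$ coefficients does \emph{not} force $B$ to be a clique, nor every vertex of $B$ to be joined to all of $R$ --- deleting a single $B$--$R$ edge, or any edge inside $B$ not used by embeddings of the core, changes $\cN(H,G)$ by only $O(n^{\alpha(H)-1})$ and leaves the leading coefficient intact; moreover extremal graphs need not be unique (e.g.\ when the core of $H$ is edgeless), so forcing $G=K_s+\overline{K_{n-s}}$ is the wrong target. Second, the ``next-order'' comparison among leading-order ties is never actually carried out, and uniformity over all $M_{s+1}$-free $G$ is exactly what is missing. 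Both problems evaporate with one observation you overlooked: in the Berge--Tutte inequality $|B|+\sum_i\frac{|A_i|-1}{2}\le s$, the case $|B|=s$ forces every component of $G-B$ to be a singleton, so $C=\emptyset$ and $G$ is literally a subgraph of $K_s+\overline{K_{n-s}}$, giving $\cN(H,G)\le\cN(H,K_s+\overline{K_{n-s}})$ with no asymptotics at all. Hence only $|B|\le s-1$ needs an asymptotic comparison, and there the loss is uniform: the core $H_0$ has at most $s$ vertices, so $\cN(H_0,G[B])\le\cN(H_0,K_{s-1})<\cN(H_0,K_s)$ (strict unless $H$ is edgeless, in which case the statement is trivial), each missing core copy costs $\Theta(n^{\alpha(H)})$ extensions into $R$, while the possible gains --- copies of $H$ meeting the $O(1)$ vertices of the nontrivial components --- are $O(n^{\alpha(H)-1})$ by Observation~\ref{obsi}~\textbf{(i)}; since counts on at most $s$ vertices take only finitely many values, all constants are uniform in $G$. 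This two-case dichotomy is precisely the paper's proof, which your draft replaces by an unnecessary and partly incorrect uniqueness argument.
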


\begin{proof}
Assume first that $|V(H)|\le s+\alpha(H)$, let $G$ be an extremal graph for $\ex(n,H,M_{s+1})$ and apply the Berge-Tutte theorem to obtain $B$. If $|B|=s$, then $G$ is a subgraph of $K_s+\overline{K_{n-s}}$. If $|B|<s$, then we compare $\cN(H,G)$ to $\cN(H,K_s+\overline{K_{n-s}})$. Let $H_0$ denote a graph we obtain from $H$ by deleting $\alpha(H)$ independent vertices. Then $H_0$ has at most $s$ vertices and clearly there are less copies of $H$ inside $B$ than inside $K_s$. As we can extend $H_0$ to $H$ using the vertices outside $B$ $\Theta(n^{\alpha(H)})$ ways, we lose $\Theta(n^{\alpha(H)})$ copies of $H$. On the other hand, we only gain new copies of $H$ that contain vertices in $A_i$ with $|A_i|>1$. There are $O(1)$ such vertices and each of them is contained in $O(n^{\alpha(H)-1})$ copies of $H$
by Observation \ref{obsi}.   

Assume now that $|V(H)|> s+\alpha(H)$ and consider a partial $(m,U)$-blowup of $H$ that does not contain $M_{s+1}$, with $m>s$. Then $U$ is an independent set. If $|U|=\alpha(H)$, then each of the at least $s+1$ vertices $v_1,\dots,v_{s+1}$ of $H$ outside $U$ have a neighbor in $U$. We pick an arbitrary such neighbor $u$ for $v_i$. In the partial blow-up, $u$ is replaced by $m$ vertices. If $u$ is picked for multiple vertices $v_i$, we pick a distinct one of the replacing vertices for each $v_i$ and denote it by $u_i$. Then the edges $u_iv_i$ form an $M_{s+1}$, a contradiction.
\end{proof}

Recall that $b(H)=b(H,s)$ denotes the largest order of a set such that $M_{s+1}$ is not a subgraph of any partial $(m,U)$-blowup of $H$.
Let $b'(H)=b'(H,F,s)$ denote the largest order of a set $U\subset V(H)$ such that neither $F$ nor $M_{s+1}$ is a subgraph of any partial $(m,U)$-blowup of $H$. 

\begin{thm}\label{ujj2}
   For every $H$, $F$ and $s$ we have $\ex(n,H,\{F,M_{s+1}\})=\Theta(n^{b'(H)})$.
\end{thm}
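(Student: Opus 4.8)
The plan is to prove matching lower and upper bounds of order $n^{b'(H)}$. For the lower bound I would fix a set $U^*\subseteq V(H)$ witnessing $b'(H)$, so that $|U^*|=b'(H)$ and \emph{no} partial $(m,U^*)$-blowup of $H$ contains $F$ or $M_{s+1}$. Taking $G$ to be the partial $(m,U^*)$-blowup of $H$ with $m=\Theta(n)$ (adjusting to exactly $n$ vertices by enlarging one blown-up class, so that $G$ remains such a blowup), $G$ is $\{F,M_{s+1}\}$-free by the defining property of $U^*$. Choosing one representative in each of the $|U^*|$ blown-up classes yields a copy of $H$, and the $m^{|U^*|}$ distinct choices give $\Omega(n^{b'(H)})$ copies of $H$.

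For the upper bound, let $G$ be an $n$-vertex $\{F,M_{s+1}\}$-free graph and apply Berge--Tutte to obtain $B$ with components $A_1,\dots,A_m$. I would set the \emph{core} $\cC=B\cup\bigcup_{|A_i|>1}A_i$; since $\sum_i(|A_i|-1)/2\le s$, we have $|\cC|=O(1)$, and the \emph{peripheral} set $P=V(G)\setminus\cC$ is an independent set each of whose vertices has all its neighbours in $B$. Every copy of $H$ is then determined by its peripheral index set $U\subseteq V(H)$ (the preimage of $P$) together with the placement $\phi$ of $V(H)\setminus U$ into $\cC$. Because $P$ is independent, $U$ is independent in $H$, and because $|\cC|=O(1)$ there are only $O(1)$ admissible pairs $(U,\phi)$; it therefore suffices to bound, for each $(U,\phi)$, the number of copies by $O(n^{b'(H)})$. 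Note this subsumes copies with an edge outside $B$, since those edges live inside $\cC$, so no separate appeal to Observation~\ref{obsi}(ii) is needed.

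The heart of the argument is a dichotomy for fixed $(U,\phi)$. For $u\in U$ let $S_u$ be the set of peripheral vertices adjacent to all of $\phi(N_H(u))$; the image of $u$ must lie in $S_u$ (note $N_H(u)\subseteq V(H)\setminus U$ by independence, and such a vertex can exist only if $\phi(N_H(u))\subseteq B$). Fix the constant $C=m^*\cdot|V(H)|$, where $m^*$ is the maximum, over the finitely many $U'\subseteq V(H)$ that fail the $b'$-condition, of some $m$ for which the partial $(m,U')$-blowup contains $F$ or $M_{s+1}$ (set $m^*=1$ if no such $U'$ exists). Put $U'=\{u\in U:|S_u|>C\}$. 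I claim $|U'|\le b'(H)$: otherwise $U'$ fails the $b'$-condition, so some $(m_0,U')$-blowup with $m_0\le m^*$ contains $F$ or $M_{s+1}$, hence so does the larger $(m^*,U')$-blowup; but since each $S_u$ with $u\in U'$ has more than $C\ge m^*|U'|$ elements, one can greedily select $m^*$ distinct representatives per class and, together with the copy's images of $V(H)\setminus U'$, embed the partial $(m^*,U')$-blowup of $H$ into $G$ (all required edges run from a blown-up class of some $u\in U'$ to $\phi(N_H(u))\subseteq B$, and $U'$ is independent), forcing $F$ or $M_{s+1}$ into $G$, a contradiction. Granting the claim, the number of copies with data $(U,\phi)$ is at most $\prod_{u\in U}|S_u|\le C^{|U\setminus U'|}\,n^{|U'|}=O(n^{b'(H)})$, and summing over the $O(1)$ pairs $(U,\phi)$ finishes the bound.

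The step I expect to be the main obstacle is the claim $|U'|\le b'(H)$, that is, that the positions admitting many choices really assemble into an honest partial blowup inside $G$. Two points require care: the uniform choice of the constant $m^*$, which is legitimate precisely because $V(H)$ has only finitely many subsets, so the relevant blowup multiplicities are bounded independently of $n$; and the verification that the greedily chosen representatives genuinely realize the $(m^*,U')$-blowup, where the independence of $U$ in $H$ and the fact that peripheral vertices attach only to $B$ are exactly what make the required adjacencies available and the forbidden ones irrelevant.
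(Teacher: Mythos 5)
Your proposal is correct and follows essentially the same route as the paper's proof: Berge--Tutte decomposition into a bounded core $A\cup B$ and an independent periphery, a finite classification of how copies of $H$ meet the core, a dichotomy on how many positions admit more than a fixed constant number of choices, and a greedy extraction of a partial $(m^*,U')$-blowup of $H$ inside $G$ that forces $F$ or $M_{s+1}$ whenever more than $b'(H)$ positions have many choices. The only differences are organizational (you bound the count directly per trace pair $(U,\phi)$ rather than running the paper's global pigeonhole-plus-contradiction from an assumed $cn^{b'(H)}$ copies, and your constant $C=m^*|V(H)|$ replaces the paper's threshold $(b'(H)+1)m'+|V(H)|$, both of which suffice for the greedy step), so no substantive comparison is needed.
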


\begin{proof}
The lower bound is given by the partial blowup in the definition of $b'$.

  Let $G$ be an $n$-vertex $\{F,M_{s+1}\}$-free graph. 
  We apply the Berge-Tutte theorem to $G$, to obtain a set $B$. Let $A$ be the union of components $A_i$ of order more than 1. Observe that $|A\cup B|\le 2s$. Assume that $G$ contains at least $cn^{b'(H)}$ copies of $H$, where $c$ is sufficiently large. It is more convenient for us to talk about embeddings of $H$: each copy of $H$ is given by an injective function $f:V(H)\rightarrow V(G)$ such that for each edge $uv$ of $H$, $f(u)f(v)$ is an edge of $G$.
  
  We will choose an edge-covering set $S$ of vertices in $H$, i.e., a set such that each edge of $H$ is incident to at least one of its vertices. Observe that the vertex set of each copy of $H$ in $G$ intersects $A\cup B$ in an edge-covering set. There are at most $2^{|V(H)|}$ choices to pick $S$; we pick one that appears as the intersection of $A\cup B$ with at least $cn^{b'(H)}/2^{|V(H)|}$ copies of $H$.

From now on we deal with copies (embeddings) of $H$ where the vertices of $S$ are in $A\cup B$, now we fix those vertices. There are $\binom{|A\cup B|}{|S|}|S|!\le 2^{2s}|V(H)|!$ ways to embed the vertices of $H$ into $A\cup B$; we fix the embedding that can be extended to at least $cn^{b'(H)}/2^{|V(H)|+2s}|V(H)|!$ embeddings of $H$. It is left to embed the remaining vertices, which form an independent set in $H$. For each vertex $v$ of $H$ outside $S$, the neighborhood of $v$ is a subset $S(v)$ of $S$, thus we have to pick from $G$ a neighbor of the corresponding vertices in $A\cup B$. Let $S(v)=\{u_1,\dots,u_k\}$, then we are interested in the order of the common neighborhood of $f(u_1),\dots,f(u_k)$ in $G$, which we denote by $x(v)$.

Recall that the partial $(m,U')$-blowup of $H$ contains $F$ or $M_{s+1}$ for each set $U'$ of vertices of $H$ with order $b'(H)+1$, for some $m$. For each $U'$, we pick the smallest possible $m$. As there are $O(1)$ ways to choose $U'$, we can pick the largest of those numbers $m$, let us denote it by $m'$.

Let $q$ denote the number of vertices $v$ in $V(H)\setminus S$ such that $x(v)\ge (b'(H)+1) m'+|V(H)|$. If $q\le b'(H)$, then we continue by picking the images of the vertices outside $S$. There are at most $n$ ways to pick $q$ of the vertices and less than $(b'(H)+1) m'+|V(H)|$ ways to pick the other vertices. Therefore, there are at most $((b'(H)+1) m'+|V(H)|)n^{b'(H)}$ embeddings after fixing the vertices of $S$, contradicting the lower bound $cn^{b'(H)}/2^{|V(H)|+2s}|V(H)|!$ established earlier, if $c$ is sufficiently large.

We obtained that $q\ge b'(H)+1$. First we pick an embedding of $H$ that extends the earlier embedding of $S$. We fix the embedding of each of the vertices except for $b'(H)+1$ vertices $v_1,\dots,v_{b'(H)+1}$ with $x(v_i)\ge (b'(H)+1) m'+|V(H)|$. We go through the vertices $v_i$ one by one, and greedily pick $m'$ vertices outside $A\cup B$ that they can be embedded to, such that each vertex is picked only once. This is doable, since each time we have to avoid the at most $|V(H)|$ vertices already fixed and the at most $b'(H)m'$ vertices picked earlier. This means we have a way to fix all but $b'(H)+1$ vertices and there are at least $m'$ common neighbors of $S(v_i)$ for the unfixed vertices $v_i$. In other words, we have in $G$ a partial $(m',U)$-blowup with $U=\{v_1,\dots,v_{b'(H)+1}$, which contains $F$ or $M_{s+1}$, a contradiction.
\end{proof}

 We are able to generalize Theorem \ref{kk} for most instances by determining $\ex(n,H,\{F,M_{s+1}\})$ apart from an additive term $O(n^{\alpha(H)-1})$.

\begin{prop}\label{main} \textbf{(i)} Let $\chi(F)>2$.
    There is an $s$-vertex $\cH(F)$-free graph $G_0$ such that $\ex(n,H,\{F,M_{s+1}\})=\cN(H,G_0+\overline{K_{n-s}})+O(n^{\alpha(H)-1})$.

    \textbf{(ii)}  Let $\chi(F)=2$. If $p(F)>s$ then $\ex(n,H,\{F,M_{s+1}\})=\ex(n,H,M_{s+1})=\cN(H,K_s+\overline{K_{n-s}})+O(n^{\alpha(H)-1})$. 
\end{prop}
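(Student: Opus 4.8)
The plan is to prove both parts by establishing matching upper and lower bounds, with the Berge-Tutte theorem and Observation \ref{obsi} doing the heavy lifting to control error terms. First I would set up the lower bound construction. For part \textbf{(i)}, the relevant extremal object should be of the form $G_0 + \overline{K_{n-s}}$ where $G_0$ is an $s$-vertex $\cH(F)$-free graph; the point is that joining a clique-like apex set of size $s$ to an independent set of size $n-s$ produces an $M_{s+1}$-free graph (any matching must use a vertex of the apex set, giving at most $s$ edges), and we must check it is also $F$-free. Since $\chi(F)>2$, and deleting an independent set from $F$ lands in $\cH(F)$, a copy of $F$ in $G_0+\overline{K_{n-s}}$ would have at most one vertex class inside the independent part; deleting that independent part from the copy yields a subgraph of $G_0$ that belongs to $\cH(F)$, contradicting $\cH(F)$-freeness of $G_0$. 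Optimizing $G_0$ over all $s$-vertex $\cH(F)$-free graphs to maximize $\cN(H, G_0+\overline{K_{n-s}})$ then gives the claimed lower bound. For part \textbf{(ii)} with $p(F)>s$, the construction is simply $K_s+\overline{K_{n-s}}$, and one must check it is $F$-free: any copy of the bipartite $F$ would need a color class of size at most $s$ inside $K_s$, but $p(F)>s$ forbids this, so in fact $\ex(n,H,\{F,M_{s+1}\})$ coincides with $\ex(n,H,M_{s+1})$, whose value is given by Proposition \ref{matc}.

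Next I would turn to the upper bound, which is where the main work lies. Let $G$ be an extremal $\{F,M_{s+1}\}$-free graph and apply Berge-Tutte to obtain $B$ with components $A_i$; write $A$ for the union of the nontrivial components, so $|A\cup B|\le 2s$. By Observation \ref{obsi}, all copies of $H$ that use an edge outside $B$, or that pass through any single fixed vertex outside $B$, contribute only $O(n^{\alpha(H)-1})$ in total, so up to the allowed error term we may restrict attention to copies of $H$ whose edges all lie within $B$ together with the attachments to the independent part. The structure of such copies is exactly that of a copy of $H$ sitting in some graph $G[B] + \overline{K_{n-|B|}}$. The key comparison is then between $\cN(H, G[B]+\overline{K_{n-s}})$ and $\cN(H,G_0+\overline{K_{n-s}})$. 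If $|B|<s$ we argue as in Proposition \ref{matc}: replacing $B$ by a full $s$-set only increases the count of copies of $H$ by a factor of $\Theta(n^{\alpha(H)})$ in the main term, while the extra copies lost are absorbed into the $O(n^{\alpha(H)-1})$ error; hence we may assume $|B|=s$.

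Having reduced to $|B|=s$, the core of the argument is to show that $G[B]$ must itself be $\cH(F)$-free (in part \textbf{(i)}) up to the error term, so that we may take $G_0=G[B]$ and obtain the upper bound $\cN(H,G[B]+\overline{K_{n-s}})\le \max_{G_0}\cN(H,G_0+\overline{K_{n-s}})$. The reason $G[B]$ is essentially $\cH(F)$-free is that a copy of some graph in $\cH(F)$ inside $G[B]$, together with an independent set of the appropriate size drawn from the $\Theta(n)$ available vertices in the independent part (each fully joined to $B$), would reconstitute a copy of $F$, contradicting $F$-freeness of $G$ --- provided that the deleted independent class of $F$ has size at most the number of vertices outside $B$ available as common neighbors, which holds since that part has $\Theta(n)$ vertices. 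In part \textbf{(ii)}, the analogous statement is that when $p(F)>s$ no copy of $F$ can be anchored on an $s$-set at all, so the extremal graph is governed purely by the matching constraint and collapses to the $M_{s+1}$-free bound of Proposition \ref{matc}. The main obstacle I anticipate is making the ``reconstitute a copy of $F$'' step fully rigorous: one must verify that the independent vertices used to rebuild $F$ can be chosen distinct from the vertices already used in the $\cH(F)$-copy and distinct from each other, which is guaranteed only because the independent part is large ($\Theta(n)$) compared to the constant $|V(F)|$, and one must handle copies of $F$ that dip into the nontrivial components $A_i$ rather than being cleanly split across $B$ and the independent part --- but these are again controlled by the $O(1)$ bound on $|A|$ together with Observation \ref{obsi}.
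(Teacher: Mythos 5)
Your overall architecture (lower bound from the join construction, upper bound via Berge--Tutte plus Observation \ref{obsi} to discard copies of $H$ touching edges outside $B$) is the same as the paper's. But your pivotal step --- showing that the small side $G[B]$ of the extremal graph is (essentially) $\cH(F)$-free by ``reconstituting'' a copy of $F$ --- rests on a false premise. You write that the $\Theta(n)$ vertices of the independent part are ``each fully joined to $B$''; that is true in the comparison graph $G[B]+\overline{K_{n-|B|}}$, but not in the extremal graph $G$ itself: after removing $B$, the remaining vertices may have arbitrary (possibly empty) neighborhoods in $B$. So a copy of a member of $\cH(F)$ inside $G[B]$ does \emph{not} extend to a copy of $F$ in $G$ --- the needed common neighborhoods outside $B$ may simply be absent --- and your conclusion that $G[B]$ is $\cH(F)$-free does not follow. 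This matters because your upper bound ends by comparing $\cN(H,G[B]+\overline{K_{n-s}})$ with the maximum of $\cN(H,G_0+\overline{K_{n-s}})$ over $\cH(F)$-free $G_0$, which is only valid if $G[B]$ belongs to that family. The paper's proof never makes (and never uses) this claim: it takes $G_0$ to be $G[B]$ itself, padded with $s-|B|$ isolated vertices, and only uses the monotone counting inequality that every copy of $H$ meeting $V(G)\setminus B$ in an independent set is also a copy in $G_0+\overline{K_{n-|B|}}$, with the padding and the stray edges absorbed into $O(n^{\alpha(H)-1})$ by Observation \ref{obsi}. In particular your separate reduction to $|B|=s$ by ``replacing $B$ by a full $s$-set'' is both unnecessary and risky (enlarging the graph on $B$ can create copies of $F$ in the comparison graph); padding with isolated vertices is the correct move.

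Two smaller points. In part \textbf{(ii)} your argument that a copy of $F$ in $K_s+\overline{K_{n-s}}$ yields ``a color class of size at most $s$ inside $K_s$'' is not right as stated: the part of the copy lying in $K_s$ need not be independent (edges of $F$ inside $K_s$ are available), so what you get is a vertex cover of $F$ of size at most $s$, not a color class. The paper argues differently and more simply: $p(F)>s$ forces $M_{s+1}\subseteq F$, so forbidding $F$ is vacuous once $M_{s+1}$ is forbidden, and the whole of \textbf{(ii)}, including the exact equality $\ex(n,H,\{F,M_{s+1}\})=\ex(n,H,M_{s+1})$ (which your construction-only argument does not give), follows at once from Proposition \ref{matc}. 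Finally, note that your instinct that something must be said about $\cH(F)$-freeness of the small side is not unreasonable --- the lower-bound direction of the stated equality does need $G_0+\overline{K_{n-s}}$ to be $F$-free --- but the argument you give for it is the part that fails, and it cannot be repaired by the full-join picture alone; one would instead have to argue that a member of $\cH(F)$ in $G[B]$ forces some subset of $B$ to have only $O(1)$ common neighbors outside, which is a different (and more delicate) counting argument than the one you sketch.
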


We remark that $\cN(H,G_0+\overline{K_{n-s}})$ can be determined by determining $\cN(H',G_0)$ for every $H'\in \cH(H)$ (and for the main term, it is enough to deal with $H'\in \cH^*(H)$). Therefore, we want to count multiple graphs in $s$-vertex $\cH(F)$-free graphs $G_0$. Such problems were studied in \cite{ger7}. We also have weights here: each copy of $H'$ is counted $\binom{n-s}{|V(H)|-|V(H')|}$ times, but we are only interested in the main term because of the error term $O(n^{\alpha(H)-1})$. Still, this problem seems to be complicated in general. 

We can deal with counting multiple graphs at the same time if the same graph is extremal for each of them, in particular if the same graph is extremal for each subgraphs of $H$. Two examples of this are counting linear forests if $F$ has a color-critical edge \cite{ger5}, and counting any graph $H$ if $F$ has a color-critical edge and chromatic number at least $300|V(H)|^9+1$ \cite{mnnrw,dahi}. 

\begin{proof}
    Let $G$ be an extremal graph for $\ex(n,H,\{F,M_{s+1}\})$ and apply the Berge-Tutte theorem to obtain $B$. Let $G_0$ be the graph we obtain from $G[B]$ by adding $s-|B|$ isolated vertices. 
    
    
    Consider different types of copies of $H$. 
    There are copies of $H$ that intersect $V(G)\setminus B$ in an independent set. There are clearly at most $\cN(H,G_0+\overline{K_{n-|B|}})$ such copies. The $s-|B|$ additional isolated vertices in $G_0+\overline{K_{n-|B|}}$ compared to $G_0+\overline{K_{n-s}}$ are in $O(n^{\alpha(H)-1})$ copies of $H$ by Observation \ref{obsi}. 
Also by Observation \ref{obsi} there are $O(n^{\alpha(H)-1})$ copies of $H$ 
that contain an edge outside $B$. This completes the proof of \textbf{(i)}.

Assume now that $F$ is bipartite and $p>s$, then $F$ contains $M_{s+1}$ and Proposition \ref{matc} implies the rest of the statement. 
\end{proof}

In the remaining case we have that $p(F)\le s$. It is easy to see using Observation \ref{obsi} that we can obtain a subgraph of $K_s+\overline{K_{n-s}}$ from the extremal graph by deleting some edges such that we delete only $O(n^{\alpha(H)-1})$ copies of $H$. In other words, for our purposes it is enough to deal with subgraphs of $K_s+\overline{K_{n-s}}$ and then we can forget about forbidding $M_{s+1}$. However, it is not clear what subgraph of the part $K_s$ is allowed. For example, if $F=K_{s,t}$ with $s\le t$, then we can have the full $K_s$ there. In this case we still have to solve the following unbalanced bipartite version of the generalized Tur\'an problem (with an additive error term $O(n^{\alpha(H)-1})$): what is the largest number of copies $H$ in $F$-free subgraphs of $K_{s,n-s}$? Moreover, we also have to solve this for counting subgraphs of $H$.

\smallskip

A natural goal is to get rid of the $O(n^{\alpha(H)-1})$ term in Proposition \ref{main}.
Clearly, if $\ex(s,\cH(H),\cH(F))=0$, then the $O(n^{\alpha(H)-1})$ term can be omitted if and only if $\ex(n,H,F)=0$, i.e., $H$ contains $F$. In the case $H=K_r$, $\chi(F)>2$ and $\ex(s,K_{r-1},\cH(F))>0$, Ma and Hou \cite{mh} asked whether one can get rid of the $O(1)$ term. We answer this negatively. Given $s$, $\cH(H)$ and $\cH(F)$, and an $s$-vertex $\cH(F)$-free graph $G_0$, we say a vertex $v$ of $G_o$ is \textit{useless} if it is not contained in any copy of any graph from $\cH(H)$. Even with one forbidden graph, there are examples with useless vertices. The simplest example is when $\ex(s,\cH(H),\cH(F))=O(1)$ and $s$ is sufficiently large, for example $\ex(s,K_3,M_2)\le 1$, thus there is a useless vertex if $s\ge 4$. See \cite{germet} for more on generalized Tur\'an problems with constant values. Another example is $\ex(s,K_r,S_k)$, where the extremal graph consists of $\lfloor s/k\rfloor$ copies of $K_k$ and a clique on the remaining vertices \cite{chase}. If $s-k\lfloor s/k\rfloor<r$, then those additional vertices are useless.

We characterize when we can completely get rid of the error term.
Observe that there is a useless vertex in an extremal graph if and only if $ex(s,\cH(H),\cH(F))=\ex(s-1,\cH(H),\cH(F))$. 

\begin{prop}\label{usi} Let $\chi(F)>2$, $s\ge 2$ and $n$ be sufficiently large. There is an $s$-vertex $\cH(F)$-free graph $G_0$ such that $\ex(n,H,\{F,M_{s+1}\})=\cN(H,G_0+\overline{K_{n-s}})$ if and only if $ex(s,\cH(H),\cH(F))>\ex(s-1,\cH(H),\cH(F))$.
\end{prop}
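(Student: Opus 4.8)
The plan is to compare, for large $n$, the extremal value $\ex(n,H,\{F,M_{s+1}\})$ with the best clean construction, $M(n):=\max\cN(H,G_0+\overline{K_{n-s}})$, where the maximum ranges over $s$-vertex $\cH(F)$-free graphs $G_0$. Every such construction is $\{F,M_{s+1}\}$-free, so $\ex(n,H,\{F,M_{s+1}\})\ge M(n)$ automatically, and the assertion to be characterised is precisely the equality $\ex(n,H,\{F,M_{s+1}\})=M(n)$. Sorting copies of $H$ in $G_0+\overline{K_{n-s}}$ by the independent set $I\subseteq V(H)$ placed in $\overline{K_{n-s}}$, one sees that $\cN(H,G_0+\overline{K_{n-s}})$ is a polynomial in $n$ of degree $\alpha(H)$ whose coefficients are determined by the numbers $\cN(H',G_0)$ with $H'\in\cH(H)$; its coefficient of $n^{\alpha(H)}$ records copies of members of $\cH^*(H)$, and its coefficient of $n^{\alpha(H)-1}$ additionally records copies of the graphs in $\cH(H)$ obtained by deleting an independent set of size $\alpha(H)-1$. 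For $n$ large, $M(n)$ is thus realised by a single $G_0$ maximising these coefficients lexicographically. Since Proposition~\ref{main}(i) already equates the leading coefficients of $\ex(n,H,\{F,M_{s+1}\})$ and $M(n)$, the whole statement comes down to the coefficient of $n^{\alpha(H)-1}$ and the exact lower-order terms.

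For the implication $\ex(s,\cH(H),\cH(F))>\ex(s-1,\cH(H),\cH(F))\Rightarrow$ the error term is removable, I would sharpen the upper-bound analysis of Proposition~\ref{main}. Take an extremal graph $G$ with Berge--Tutte set $B$ and nontrivial odd components $A_1,\dots,A_m$, and set $C=B\cup\bigcup_iA_i$. By Observation~\ref{obsi}, all but $O(n^{\alpha(H)-1})$ copies of $H$ have their vertices outside $C$ forming an independent set of size $\alpha(H)$ attached only to $B$, so the structure inside $C$ controls the coefficient of $n^{\alpha(H)-1}$. The strict inequality is exactly the statement that, in an optimal $s$-vertex core, every one of the $s$ budgeted hub positions is useful, i.e.\ lies in a copy of a member of $\cH(H)$ and so contributes positively at order $n^{\alpha(H)-1}$. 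The crux is then to show that diverting any of the matching budget into a nontrivial component $A_i$ strictly lowers the count by $\Theta(n^{\alpha(H)-1})$, exceeding the $O(n^{\alpha(H)-1})$ that such a component can ever create once $n$ is large; this forces $m=0$ and $|B|=s$, whence $G=G[B]+\overline{K_{n-s}}$ and the equality $\ex(n,H,\{F,M_{s+1}\})=M(n)$ holds.

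For the converse, I would argue by contraposition: assuming $\ex(s,\cH(H),\cH(F))=\ex(s-1,\cH(H),\cH(F))$, I would exhibit a $\{F,M_{s+1}\}$-free graph with $\cN(H,\cdot)>M(n)$. By the observation preceding the statement, the hypothesis yields an $s$-vertex $\cH(F)$-free graph containing a \emph{useless} vertex $v$, lying in no copy of any member of $\cH(H)$; since $\cH^*(H)\subseteq\cH(H)$ and $H\in\cH(H)$, such a $v$ lies in no copy of $H$ in the corresponding clean construction, so deleting it changes no relevant count and, in particular, preserves the coefficient of $n^{\alpha(H)}$. The idea is to spend the matching budget freed by $v$ on a single connected odd gadget $C$ of matching number one, attached to the remaining $s-1$ hubs and placed among the $\overline{K_{n-s}}$ part, keeping the total Berge--Tutte budget equal to $s$ and hence $M_{s+1}$-freeness. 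The edges inside $C$ then produce $\Theta(n^{\alpha(H)-1})$ copies of $H$ that use an edge of $C$ together with $\alpha(H)-1$ vertices of $\overline{K_{n-s}}$ and therefore occur in no clean construction; as the removal of $v$ cost nothing, the net effect is a strict gain of order $n^{\alpha(H)-1}$ over $M(n)$.

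I expect the main obstacle to be twofold. First, the hypothesis is phrased through the unweighted family count $\ex(s,\cH(H),\cH(F))$, whereas $M(n)$ is governed by a degree-weighted, lexicographic optimum over the same family; the real work is to show that an optimiser of the latter carries a useless vertex exactly when $\ex(s,\cH(H),\cH(F))=\ex(s-1,\cH(H),\cH(F))$, so that the gadget can be built from a graph realising $M(n)$ itself rather than merely from an $\cH(H)$-extremal graph, and symmetrically that, under the strict inequality, every hub of the clean optimiser is genuinely useful at order $n^{\alpha(H)-1}$. Second, because $\chi(F)>2$, an edge of the gadget together with a clique among the hubs can complete a copy of $F$, so $C$ must be joined only to a carefully controlled portion of $B$; verifying simultaneously that this attachment keeps the graph $F$-free and still yields the claimed $\Theta(n^{\alpha(H)-1})$ new copies of $H$ is where the delicate bookkeeping lies.
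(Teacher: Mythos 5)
Your overall architecture (forward direction by sharpening Proposition~\ref{main}, converse via a useless vertex) parallels the paper's, but both halves have genuine gaps. In the forward direction you plan to rule out nontrivial odd components by showing that diverting matching budget into a component strictly loses $\Theta(n^{\alpha(H)-1})$ copies while such a component ``can ever create'' only $O(n^{\alpha(H)-1})$ --- but these are the same order of magnitude with uncontrolled constants, so no contradiction follows from that comparison. You have also placed the hypothesis at the wrong order: $\ex(s,\cH(H),\cH(F))>\ex(s-1,\cH(H),\cH(F))$ is an inequality between integers, so it buys a gap of at least $1$ in the \emph{leading} coefficient, and this is exactly how the paper uses it. If $|B|<s$, the analysis of Proposition~\ref{main} bounds $\cN(H,G)$ by $\ex(s-1,\cH(H),\cH(F))\binom{n-|B|}{\alpha(H)}+O(n^{\alpha(H)-1})$, which falls short of the construction's $\ex(s,\cH(H),\cH(F))\binom{n-s}{\alpha(H)}$ by $\Omega(n^{\alpha(H)})$, an amount that swallows every error term; hence the extremal graph has $|B|=s$, and then the Berge--Tutte budget $|B|+\sum_i(|A_i|-1)/2\le s$ forces all components to be singletons, so there is no edge outside $B$ and the second error source vanishes outright. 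No lexicographic bookkeeping of the $n^{\alpha(H)-1}$ coefficients is needed anywhere.

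In the converse direction your gadget is both problematic and unnecessary. As you concede yourself, attaching an odd component $C$ to the hubs threatens to create $F$ (for $F=K_4$, a triangle joined to any hub vertex incident to an edge already yields $K_4$), and the promised $\Theta(n^{\alpha(H)-1})$ new copies through an edge of $C$ need not exist for every $H$: the outside independent vertices are not adjacent to $C$, and for $H=K_3$ the gain is only $O(1)$. The move consistent with the paper's discussion is the opposite one: if $v$ is useless in $G_0$, then no copy of $H$ in $G_0+\overline{K_{n-s}}$ uses $v$ at all (any copy through $v$ would put $v$ in a copy of a member of $\cH(H)$ inside $G_0$), so the legal construction $(G_0-v)+\overline{K_{n-s+1}}$, still $F$-free and even $M_s$-free, retains every copy and strictly gains as soon as some copy has a nonempty independent part --- the degenerate situation $\ex(s,\cH(H),\cH(F))=0$ being treated separately before the proposition. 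This beats a clean $s$-hub construction carrying a useless vertex with no attachment bookkeeping, and the paper's ``only if'' direction rests on exactly the observation, stated just before the proposition, that a useless vertex in an extremal graph exists if and only if $\ex(s,\cH(H),\cH(F))=\ex(s-1,\cH(H),\cH(F))$. Finally, the two obstacles you flag at the end (relating the unweighted hypothesis to the weighted optimum, and keeping the gadget $F$-free) are left open in your write-up, so as it stands the proposal is a plan whose cruxes are unproved rather than a proof, and its central quantitative step would fail as formulated.
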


\begin{proof}
We follow the proof of Proposition \ref{main} (\textbf{(i)}. The additional error term $O(n^{\alpha(H)-1})$ came from two parts of the proofs. If $|B|<s$, then the proof gives the upper bound $\cN(H,G)\le\ex(s-1,\cH(H),\cH(F))\binom{n-|B|}{\alpha(H)}+O(n^{\alpha(H)-1})$. This is clearly less than $\ex(s,\cH(H),\cH(F))\binom{n-s}{\alpha(H)}=\cN(H,G_0+\overline{K_{n-s}})$ for $n$ sufficiently large, a contradiction. 

If $|B|=s$, then we cannot have any edge outside $B$, which was the other possible reason for having the error term.
\end{proof}

It is a natural question when we have useless vertices, but we are unable to answer this question in general. Given a family $\cH$ of graph, let $\chi(\cH)$ denote the smallest chromatic number among graphs in $\cH$. We can show that there are no useless vertices if $\chi(H)<\chi(F)$ and $n$ is large enough. We also state the result in the form that we use here.

\begin{prop}\label{usel} 
\textbf{(i)} If $\chi(H)<\chi(F)$ and $n$ is sufficiently large, then $\ex(n,H,F)>\ex(n-1,H,F)$.

\textbf{(ii)}  If $\chi(\cH^*(H))<\chi(\cH(F))$ and $n$ is sufficiently large, then $\ex(n,\cH(H),\cH(F))=\ex(n-1,\cH(H),\cH(F))$. 
\end{prop}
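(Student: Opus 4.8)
The plan is to prove the two inequalities separately. The bound $\ex(n,\cH(H),\cH(F))\ge\ex(n-1,\cH(H),\cH(F))$ holds for every $n$: taking an extremal $(n-1)$-vertex $\cH(F)$-free graph and adjoining a single isolated vertex yields an $n$-vertex $\cH(F)$-free graph in which every copy of a member of $\cH(H)$ survives, so the optimum cannot drop. Hence the whole content is the reverse inequality $\ex(n,\cH(H),\cH(F))\le\ex(n-1,\cH(H),\cH(F))$, and by the observation recorded immediately before Proposition~\ref{usi} this is equivalent to producing, in some extremal $n$-vertex graph $G$, a \emph{useless} vertex --- one lying in no copy of any member of $\cH(H)$ --- since deleting it returns an $(n-1)$-vertex $\cH(F)$-free graph realising the same count.

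To locate such a vertex I would first fix the coarse shape of an extremal $G$. The relevant numerical fact is $\chi(\cH(F))=\chi(F)-1$: deleting a colour class of an optimal colouring of $F$ lowers its chromatic number by exactly one, while deleting any independent set can lower it by at most one, so every member of $\cH(F)$ has chromatic number at least $\chi(F)-1$ and this value is attained. Using a Tur\'an-type description of $\cH(F)$-free graphs I would reduce $G$, up to a controlled number of vertices, to a blow-up of a bounded ``core'', and then bring in the hypothesis $\chi(\cH^*(H))<\chi(\cH(F))$, which governs how the minimal members of $\cH(H)$ (those of $\cH^*(H)$, obtained by deleting a maximum independent set of $H$) can embed into such a host.

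The hard part will be this last step: exhibiting a single vertex that simultaneously misses every copy of \emph{every} member of $\cH(H)$, which is delicate because these members occur in several different orders at once and the minimal ones are plentiful. I expect the decisive estimate to be a comparison, for a candidate vertex $v$, between the number of copies of members of $\cH(H)$ through $v$ and the total number of copies, exploiting ``$n$ sufficiently large'' so that, once the bounded core is fixed, some vertex is forced to contribute nothing beyond what $G-v$ already sees. This reverse inequality is the crux of the statement, and it is exactly here that the chromatic hypothesis $\chi(\cH^*(H))<\chi(\cH(F))$ must be used in an essential way, beyond the coarse consequence $\chi(\cH(H))=\chi(H)-1$ that it already entails.
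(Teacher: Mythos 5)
Your proposal goes in the wrong direction. The ``$=$'' in the statement of \textbf{(ii)} is a typo: the sentence introducing the proposition says ``we can show that there are \emph{no} useless vertices if $\chi(H)<\chi(F)$,'' and the intended conclusion (the one the paper proves, and the one actually used together with Proposition \ref{usi}) is the strict inequality $\ex(n,\cH(H),\cH(F))>\ex(n-1,\cH(H),\cH(F))$, exactly parallel to \textbf{(i)}. You took the equality at face value and organized your whole plan around \emph{producing} a useless vertex in an extremal graph. But under the chromatic hypothesis this is impossible for large $n$: the paper's proof assumes a useless vertex $v$ exists in an extremal graph and derives a contradiction. Concretely, since $k=\chi(\cH^*(H))<\chi(\cH(F))$, the Tur\'an graph $T_{k}(n)$ is $\cH(F)$-free and supplies $\Theta(n^{|V(H_0)|})$ copies of some $H_0\in\cH^*(H)$; by the Alon--Shikhelman characterization \cite{ALS2016}, an extremal graph falling short of this count would be possible only if it contained no $|V(F)|$-blowup of any member of $\cH^*(H)$, so such a blowup $H_0'$ must be present. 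Deleting the edges at the useless vertex $v$ (which destroys no copies, by uselessness) and joining $v$ to the common neighborhood of a blown-up class $U_0$ of $H_0'$ strictly increases the number of copies, while any new copy of a member of $\cH(F)$ through $v$ can have $v$ replaced by an unused vertex of $U_0$ (possible because $|U_0|=|V(F)|$), yielding a copy already in $G$ --- contradiction. So the statement you are attempting to prove in your crux step is false, not merely hard.

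Two further gaps, independent of the misreading. First, part \textbf{(i)} --- a strict inequality, to which your ``trivial $\ge$ plus reverse $\le$'' decomposition does not even apply --- is never addressed; its proof is the same exchange argument with $H'$ the $|V(F)|$-blowup of $H$ itself. Second, even as a plan for the direction you chose, the proposal is incomplete by your own admission (``the hard part will be this last step''), and the route you sketch is unavailable: there is no ``Tur\'an-type description'' reducing a general extremal graph for $\ex(n,\cH(H),\cH(F))$ to a blow-up of a bounded core, and no such stability input is needed --- the paper's argument is a single local rewiring of one vertex, with the chromatic hypothesis entering only through the lower bound $\cN(H_0,T_k(n))=\Theta(n^{|V(H_0)|})$ that forces the blowup $H_0'$ to appear. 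Your preliminary observations (the monotonicity $\ex(n,\cdot)\ge\ex(n-1,\cdot)$ via an added isolated vertex, the equivalence with useless vertices recorded before Proposition \ref{usi}, and the identity $\chi(\cH(F))=\chi(F)-1$) are correct but do not touch the actual content.
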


\begin{proof} Let us start by proving \textbf{(i)}.
   Let $G$ be an extremal graph for $\ex(n,H,F)$ and assume indirectly that $v$ is a useless vertex. Let $\chi(H)=k$. Let $H'$ denote the $|V(F)|$-blowup of $H$. If $G$ does not contain $H'$, then 
 $\cN(H,G)\le \ex(n,H,H')=o(n^{|V(H)|})$. Indeed, Alon and Shikhelman \cite{ALS2016} showed that $\ex(n,H,H')=\Theta(n^{|V(H)|})$ if and only if $H'$ is not contained in any blow-up of $H$. Clearly we have $\ex(H,F)\ge \cN(H,T_k(n))=\Theta(n^{|V(H)|})$, thus for $n$ sufficiently large we obtain a contradiction.

 Consider a copy of $H'$ in $G$, and let 
 $U$ be one of the blown-up classes in $H'$. We delete all the edges incident to $v$ from $G$ and add all the edges from $v$ to the common neighbors of vertices of $U$ to obtain a graph $G'$. Obviously, $v$ is connected to some of the other blown-up classes in $H'$, and in particular there are some new copies of $H$, thus $\cN(H,G')>\cN(H,G)=\ex(n,H,F)$. However, we claim that $G'$ is $F$-free, which clearly gives a contradiction. Indeed, assume that there is a copy of $F$ in $G$ that is not in $G$. Then this copy must contain $v$ and there is a vertex $u\in U$ that is not contained in this copy. Then we can replace $v$ by $u$ to obtain another copy of $F$, since the neighbors of $v$ are all neighbors of $u$. But this new copy of $F$ is in $G$, a contradiction.

 The proof of \textbf{(ii)} goes similarly. Let $\cH'$ denote the family of graphs that are $|V(F)|$-blowups of some graphs in $\cH^*(H)$. If $G$ is $\cH'$-free, then the same argument as above completes the proof. If $G$ contains the $|V(F)|$-blowup  $H_0'$ for some $H_0\in\cH^*(H)$, then let $U_0$ be one of the blown-up classes. Similarly to the proof of \textbf{(i)}, we delete all the edges incident to $v$ from $G$ and add all the edges from $v$ to the common neighbors of vertices of $U_0$ to obtain a graph $G''$. Clearly the number of copies of graphs in $\cH^*(H)$ increases. If there is a member of $\cH(F)$ in $G''$, then it contains $v$ and we can replace $v$ by an unused vertex of $U_0$, a contradiction completing the proof.
\end{proof}

Recall that Ma and Hou \cite{mh} showed the following.

\begin{thm}[Ma and Hou \cite{mh}]\label{klikks} Let $n\ge 2s+1$ and $r\ge k\ge 3$. Then $\ex(n,K_k,\{K_{r+1},M_{s+1}\})=\max\{\cN(K_k,T_{r-1}(s)+\overline{K_{n-s}}),\cN(K_k,T_r(2s+1))\}$.\end{thm}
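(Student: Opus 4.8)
The plan is to establish the theorem by proving matching upper and lower bounds, where the lower bound is immediate from the two explicit constructions and the main work lies in the upper bound. First I would observe that the lower bound follows directly: both $T_{r-1}(s)+\overline{K_{n-s}}$ and $T_r(2s+1)$ are $\{K_{r+1},M_{s+1}\}$-free on at most $n$ vertices (the former has matching number exactly $s$ since every edge meets the $s$-vertex part, and $T_r(2s+1)$ trivially has at most $s$ independent edges and no $K_{r+1}$), so $\ex(n,K_k,\{K_{r+1},M_{s+1}\})$ is at least the maximum of the two copy-counts.

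For the upper bound I would take an extremal graph $G$ and apply the Berge--Tutte theorem to obtain a set $B$, writing $A$ for the union of the odd components of size greater than $1$, so that $|A\cup B|\le 2s$. The strategy is a case split according to $|B|$. In the case $|B|=s$, Observation \ref{obsi}(ii) forces all but $O(n^{\alpha(K_k)-1})=O(n^{k-2})$ of the copies of $K_k$ to live with at most one vertex outside $B$, so up to lower-order terms $G$ is a subgraph of $K_s+\overline{K_{n-s}}$; here I would use that $G[B]$ is $K_{r+1}$-free, so $G[B]$ has at most $\cN(K_{k-1},T_{r-1}(s))$ copies of $K_{k-1}$, each extendable to a $K_k$ in roughly $n-s$ ways, yielding the bound $\cN(K_k,T_{r-1}(s)+\overline{K_{n-s}})$ plus an error term. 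The more delicate case is $|B|<s$, where the extra vertices in the nontrivial components $A_i$ can contribute. The key point is that $G[A\cup B]$ has at most $2s$ vertices and is $K_{r+1}$-free, so by Turán's theorem (and Zykov's count) it contains at most $\cN(K_k,T_r(2s))$ copies of $K_k$ among its own vertices; combining these two subcases, and noting that when $|B|<s$ the bound aligns with the $T_r(2s+1)$ construction, gives the claimed maximum.

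The step I expect to be the main obstacle is reconciling the two regimes into a single clean maximum, because the naive bounds ($K_{r+1}$-freeness on $\le 2s$ vertices plus at most one extension vertex) do not immediately collapse to exactly $\max\{\cN(K_k,T_{r-1}(s)+\overline{K_{n-s}}),\cN(K_k,T_r(2s+1))\}$. The honest difficulty is that for $3\le k\le r$ the count $\cN(K_k,G)$ is a genuinely nonlinear, superadditive function of the clique structure, so one must argue that concentrating the ``free'' independent vertices on a single dominating part (as in $T_{r-1}(s)+\overline{K_{n-s}}$) beats spreading mass into the bounded components, except when the entire graph fits inside $O(s)$ vertices, in which case Zykov's theorem makes $T_r(2s+1)$ optimal. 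I would handle this by a direct comparison argument: show that if $|B|<s$ and $G$ has a vertex of unbounded degree then one can shift toward the $|B|=s$ configuration without losing copies of $K_k$ (up to the admissible error), and otherwise $G$ is essentially bounded and Turán/Zykov applies exactly. The final bookkeeping of the additive error terms against the leading $\Theta(n^{k-1})$ growth should then force the precise maximum for $n\ge 2s+1$.
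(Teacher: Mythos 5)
Your proposal has genuine gaps, and the most serious one is structural: Theorem \ref{klikks} is an \emph{exact} result, valid for every $n\ge 2s+1$, and your argument is asymptotic by design. Right from the start your bookkeeping is off: $\alpha(K_k)=1$, so Observation \ref{obsi} gives an $O(n^{\alpha(K_k)-1})=O(1)$ error term, not $O(n^{k-2})$, and the main term $\cN(K_k,T_{r-1}(s)+\overline{K_{n-s}})$ is $\Theta(n)$, not $\Theta(n^{k-1})$ (every copy of $K_k$ uses at most one vertex of the independent part). More importantly, even a correctly computed $O(1)$ additive slack cannot be ``absorbed'' to force equality, and your final sentence, in which the bookkeeping ``should then force the precise maximum,'' is exactly where the entire difficulty of the theorem lives; it is asserted, not proved. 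Your case analysis also fails concretely in the regime $0<|B|<s$: copies of $K_k$ need not lie inside $A\cup B$, since a $K_{k-1}$ in $B$ together with any of the $\Theta(n)$ singleton vertices outside $A\cup B$ adjacent to it forms a $K_k$. So bounding $\cN(K_k,G[A\cup B])$ by a Tur\'an/Zykov count on at most $2s$ vertices ignores linearly many copies, and the dichotomy ``$|B|=s$ matches the first construction, $|B|<s$ matches the second'' does not hold. (As a side remark, $|A\cup B|$ can even exceed $2s$ when $B$ is small, e.g., $B=\emptyset$ with $s$ disjoint triangles gives $3s$ vertices.) Finally, in the subcase $|B|=s$ you need to maximize $\cN(K_k,G[B])+\sum_{v\notin B}\cN(K_{k-1},G[N(v)\cap B])$ jointly, and there is a real tension between making $G[B]$ clique-rich (it may contain $K_r$) and keeping the outside neighborhoods $K_r$-free; declaring that $G[B]\le \cN(K_{k-1},T_{r-1}(s))$-type bounds ``yield the claimed maximum'' skips this optimization.

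This is also not the route the result is actually proved by. The paper cites Theorem \ref{klikks} from Ma and Hou and explains that their proof (like the paper's generalization, Theorem \ref{multip}) rests on Zykov symmetrization: one symmetrizes non-adjacent vertices inside $B$ and inside the nontrivial components without creating $K_{r+1}$ or $M_{s+1}$ (Proposition \ref{propi}), reduces to a single nontrivial component with $G[B]$ complete multipartite, and then a structural analysis pushes the extremal graph to be complete multipartite, after which Proposition \ref{propp} yields the clean dichotomy: either one part has order at least $n-s$ (the $T_{r-1}(s)+\overline{K_{n-s}}$-type configuration) or everything lives on at most $2s+1$ vertices plus isolated vertices (the $T_r(2s+1)$-type configuration). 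The symmetrization machinery is what delivers an exact statement for all $n\ge 2s+1$ and replaces your ``direct comparison argument,'' which as written would need to be invented from scratch. Your lower-bound paragraph is fine; to repair the upper bound you would need to abandon the error-term framework and carry out a symmetrization (or an equally exact local-exchange) argument.
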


We determine $\ex(n,H,\{K_{r+1},M_{s+1}\})=\cN(H,K)$ where $H$ is a complete multipartite graph with at most $r$ parts. Note that it is known \cite{gypl} that $\ex(n,H,K_{r+1})=\cN(H,T)$, where $T$ is a complete $r$-partite graph. However, it is a complicated calculation to find which complete $r$-partite graph contains the most copies of $H$ that we cannot handle in general. We have the same problem in our theorem below.


The proof of Theorem \ref{klikks} in \cite{mh} uses Zykov symmetrization. Given a $K_{r+1}$-free graph $G$ and two non-adjacent vertices $u$ and $v$, we say that we \textit{symmetrize} $u$ to $v$ if we delete all the edges incident to $u$, and for each edge $vw$, we add the edges $uw$. In other words, we replace the neighborhood of $u$ by the neighborhood of $v$. It is easy to see that such symmetrization steps do not create $K_{r+1}$, and either symmetrizing $u$ to $v$ or symmetrizing $v$ to $u$ does not decrease the number of copies of $K_k$. We apply such steps as long as we can. One has to show that this process terminates. Afterwards, it is clear that the resulting graph is complete multipartite. This argument was used by Zykov \cite{zykov} to show that the Tur\'an graph contains the most copies of $K_k$ among $n$-vertex $K_{r+1}$-free graphs. 

Let $H$ be a complete $k$-partite graph and $k\le r$.
It was observed in \cite{gypl} that the Zykov symmetrization does not decrease the number of copies of $H$ either. They used is to show that a complete $r$-partite graph contains the most copies of $H$ among $n$-vertex $K_{r+1}$-free graphs. Analogously, we can use this to extend the proof of Theorem \ref{klikks} in \cite{mh} to our more general version. A part of the proof from \cite{mh} extends without further ideas. We state the generalization of that part in the next proposition and
 we only give a sketch of the proof. The interested Reader may find more details in \cite{mh}.

\begin{prop}\label{propi} Let $H$ be a complete multipartite graphwith at most $r$ parts. Then there is an extremal graph $G$ for $\ex(n,H,\{K_{r+1},M_{s+1}\})$ such that applying the Berge-Tutte theorem, 
$G[B]$ is a complete multipartite graph and
$|A_2|=\dots =|A_m|=1$.
\end{prop}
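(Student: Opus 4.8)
The plan is to reduce an arbitrary extremal graph to the desired shape by Zykov symmetrization, carried out in two stages and arranged so that the Berge--Tutte witness is controlled throughout. Fix an extremal graph $G$ for $\ex(n,H,\{K_{r+1},M_{s+1}\})$ together with a set $B$ from the Berge--Tutte theorem, and let $A_1,\dots,A_m$ be the odd components of $G-B$, ordered so that $A_1$ is largest. I may use freely that symmetrization never creates $K_{r+1}$, and that since $H$ is complete multipartite, for any two non-adjacent vertices at least one of the two symmetrization directions does not decrease $\cN(H,\cdot)$ (the observation of \cite{gypl}). The only genuinely new point compared with Zykov's theorem is that each modification must also preserve $M_{s+1}$-freeness, i.e.\ keep $|B|+\sum_i\frac{|A_i|-1}{2}\le s$; arranging this is where the care lies.

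The second structural feature, that $G[B]$ is complete multipartite, is the clean part, so I would establish it last. The key observation is that symmetrizing a vertex $u\in B$ to a vertex $v\in B$ rewires only edges incident to $u$, and every such edge meets $B$; hence none of them is an edge of $G-B$. Consequently $G-B$ is left completely unchanged, so the \emph{same} set $B$ remains a Berge--Tutte witness for the new graph, with exactly the same odd components $A_1,\dots,A_m$ and the same value of $|B|+\sum_i\frac{|A_i|-1}{2}\le s$; thus $M_{s+1}$-freeness is preserved automatically. Symmetrizing non-adjacent pairs inside $B$ in the count-nondecreasing direction, and then invoking the standard termination and equivalence-relation argument of Zykov's proof applied to the induced subgraph $G[B]$, yields that $G[B]$ is complete multipartite. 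Since this stage never touches $G-B$, it also preserves whatever component structure has been arranged beforehand, so it is safe to perform it after the first stage.

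The first structural feature, reducing to a single non-trivial component, is the main obstacle, because any symmetrization that moves a vertex within or across a component alters $G-B$, and hence both the Berge--Tutte partition and the matching number, so it cannot be applied blindly. Following the argument of \cite{mh}, the plan is to transfer matching ``budget'' from the smaller non-trivial components onto $A_1$: given a non-trivial component $A_i$ with $i\ge 2$, deleting its internal edges turns its $|A_i|$ vertices into singletons and frees $\frac{|A_i|-1}{2}\ge 1$ from the sum, and this freed budget is reinvested by enlarging $A_1$ (keeping its order odd, using the abundant singletons) so that the total stays $\le s$. One then checks, using the superadditivity/convexity of $\cN(H,\cdot)$ on complete-multipartite hosts (each part of $H$ must be embedded into an independent set, and $H$ has at most $r$ parts), that this reorganization does not decrease the number of copies of $H$; iterating leaves $A_1$ as the only non-trivial component. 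The delicate bookkeeping is precisely the parity adjustment of the component orders against the budget $s$ together with the before/after comparison of $\cN(H,\cdot)$. As this step extends the corresponding step of \cite{mh} without new ideas, I would carry out the reorganization exactly as there and refer the reader to \cite{mh} for the remaining routine details.
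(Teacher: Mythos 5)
Your second stage (symmetrization inside $B$) is sound and matches the paper: every edge incident to $u\in B$ meets $B$, so $G-B$ is untouched, the same $B$ remains a Berge--Tutte witness with the same odd components, and the usual termination argument applies; your observation that the two stages can be ordered either way is also correct. The genuine gap is in your first stage. The mechanism you describe --- delete all internal edges of a non-trivial component $A_i$ with $i\ge 2$ to free $\frac{|A_i|-1}{2}$ of matching budget, then ``enlarge $A_1$ using the abundant singletons'' --- is not the argument of \cite{mh}, and as stated it does not work. Deleting the edges inside $A_i$ destroys every copy of $H$ that uses an edge of $A_i$, and you offer no real compensation: the appeal to ``superadditivity/convexity of $\cN(H,\cdot)$ on complete-multipartite hosts'' is unproved (and the graph need not be complete multipartite at this stage), while ``enlarging $A_1$'' is unspecified --- attaching singletons to $A_1$ means adding concrete edges, and you would have to say which ones, why no $K_{r+1}$ arises, and why the gain offsets the loss. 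Since $G$ is extremal, any net loss is fatal, and this before/after comparison is exactly the point you punt on; deferring it to \cite{mh} does not close the gap, because the step you defer to is described incorrectly there.

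The actual argument (in the paper, following \cite{mh}) merges components by symmetrization rather than deletion and reinvestment: choose $v_1\in A_1$ contained in the largest number of copies of $H$, choose $v_2,v_3\in A_2$ such that $A_2\setminus\{v_2,v_3\}$ is connected, and symmetrize both $v_2$ and $v_3$ to $v_1$. This creates no $K_{r+1}$ (standard property of symmetrization), does not decrease $\cN(H,\cdot)$ by the choice of $v_1$ together with the observation of \cite{gypl}, and --- crucially --- keeps the \emph{same} $B$ as a Berge--Tutte witness: the two vertices simply migrate from the component of $A_2$ to that of $A_1$ (after the step they have $v_1$'s neighborhood, hence neighbors in $A_1$), so $|A_1|$ grows by $2$, $|A_2|$ shrinks by $2$, all parities are preserved, and $|B|+\sum_{i=1}^m\frac{|A_i|-1}{2}$ is unchanged. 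Note that the connectivity requirement on $A_2\setminus\{v_2,v_3\}$, absent from your outline, is essential: it guarantees the residue of $A_2$ is still a single component of odd order, so the Berge--Tutte structure survives --- this is precisely the parity bookkeeping you flagged as delicate but did not carry out. Iterating this step removes the non-trivial components other than $A_1$ two vertices at a time, which is what your budget-transfer heuristic was meant to achieve.
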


\begin{proof}[Sketch of proof]
First we apply Zykov symmetrization for $u,v$ both inside $B$. This does not create $K_{r+1}$ because of the properties of the symmetrization mentioned earlier. This also does not create $M_{s+1}$, because in the resulting graph we still have the same $B$ that cuts the graph into parts as prescribed in the Berge-Tutte theorem, the same parts as originally. We have to show that the process terminates. We start with symmetrizing vertices with different neighborhood outside $B$. This way we obtain that vertices inside $B$ have the same neighborhood outside $B$, since the number of distinct neighborhoods outside $B$ decreases. After that we symmetrize inside $B$. Whenever we symmetrize $u$ with neighborhood $A$ to $v$, in the next steps we symmetrize each vertex with neighborhood $A$ to $v$. After all these steps, the number of different neighborhoods decreases, thus this process terminates.

If there are two components in $G-B$, say $A_1$ and $A_2$ that contain edges, then without loss of generality $v_1\in A_1$ is in the largest number of copies of $K$. Let $v_2,v_3\in A_2$ such that $A_2\setminus \{v_2,v_3\}$ is connected, and symmetrize $v_2$ and $v_3$ both to $v_1$. Then we do not create $M_{s+1}$, since the same $B$ cuts the graph into parts as prescribed in the Berge-Tutte theorem. Note that the number of vertices in $A_1$ and $A_2$ changed, but neither the parity of $|A_1|$ and $|A_2|$, nor $\sum_{i=1}^m \frac{|A_i|-1}{2}$ changed.
\end{proof}

\begin{prop}\label{propp}
    If a complete $r$-partite $M_{s+1}$-free graph has more than $2s+1$ vertices, then a part has order at least $n-s$.
\end{prop}

\begin{proof}
    Let us pick edges one by one, such that we always pick an edge between the two largest part, and remove its endvertices. We are going to show that after picking $s$ edges, there are vertices in at least two parts. 
    As we have at least two vertices left, the only possible problem is that we are left with a part $B_0$ of order at least two. Observe that if two parts have the same order at one point of this algorithm, then their order will never differ by more than one. Therefore, $B_0$ was always the largest part and we removed a vertex from $B_0$ and a vertex outside $B_0$. There are at least $s+1$ vertices avoiding $B_0$ and in each step we removed one, thus there is an undeleted vertex outside $B_0$. This way we find an $M_{s+1}$, a contradiction completing the proof.
\end{proof}

Now we are ready to state and prove our generalization of Theorem \ref{klikks}.

\begin{thm}\label{multip}     If $H$ is a complete multipartite graph with at most $r$ parts, then $\ex(n,H,\{K_{r+1},M_{s+1}\})=\cN(H,K)$ where $K$ is either a complete multi-partite $n$-vertex graph with at most $r$ parts and a part of order at least $n-s$, or consists of a complete multipartite graph on at most $2s+1$ vertices with at most $r$ parts and possibly some isolated vertices.
\end{thm}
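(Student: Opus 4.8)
The plan is to prove matching lower and upper bounds. For the lower bound I would exhibit the two candidate families and verify, via the Berge--Tutte theorem, that each is $\{K_{r+1},M_{s+1}\}$-free: any complete $\le r$-partite graph is $K_{r+1}$-free; a complete multipartite $n$-vertex graph with a part of order $\ge n-s$ has matching number at most $s$ (delete the at most $s$ vertices outside the giant part and what remains is independent); and a complete multipartite graph on at most $2s+1$ vertices together with isolated vertices also has matching number at most $s$. Hence $\ex(n,H,\{K_{r+1},M_{s+1}\})\ge \cN(H,K)$ for the better of the two, and it remains to prove the reverse inequality.

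For the upper bound I would take an extremal graph $G$ and invoke Proposition \ref{propi} to reduce to the case that $G[B]$ is complete multipartite, $G-B$ has a unique nontrivial component $A_1$, and every other component is a singleton; the Berge--Tutte inequality gives $|B|+\tfrac{|A_1|-1}{2}\le s$, so $|A_1\cup B|\le 2s+1$. The goal is to show that $G$ may be taken complete multipartite on its non-isolated vertices. Symmetrizing within $A_1$, within $B$, and among the singletons attached to $B$ is safe, since each such step preserves the Berge--Tutte partition (hence $M_{s+1}$-freeness) and never decreases $\cN(H,\cdot)$; after these moves the vertices of $A_1$ split into twin classes, the attached singletons share a common neighborhood $N\subseteq B$, and $G[N]$ has at most $r-1$ parts.

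The heart of the argument is to rule out the ``mixed'' configuration in which $A_1$ is nontrivial and singletons are simultaneously attached to $B$. Here I would use that $H$ is complete multipartite: because $B$ separates $A_1$ from the singletons, any vertex mapped into $A_1$ and any vertex mapped to a singleton are non-adjacent in $G$, hence lie in a common part of $H$; consequently every copy of $H$ essentially lives either inside the bounded dense region $A_1\cup B$ or inside the structure ``core in $B$ plus an independent blow-up part $I$''. Copies of the first type number $O(1)$, while those of the second scale like $\cN(H_0,G[B])\binom{|I|}{\alpha(H)}$ for $H_0\in\cH^*(H)$. Since $\binom{x}{\alpha(H)}$ is convex, spending part of the Berge--Tutte budget on a nontrivial $A_1$ rather than on enlarging $B$ and $I$ can only hurt the dominant term, so the optimum is attained at a corner: either $A_1$ is trivial (all components singletons), or no singleton is attached to $B$. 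In each pure case an $O(1)$-vertex optimization then turns $G$ into a genuine complete $\le r$-partite graph — in the first case $B$ is complete $(r-1)$-partite and the singletons form one big part, while in the second $A_1\cup B$ is complete $\le r$-partite and all remaining vertices are isolated.

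Finally I would apply Proposition \ref{propp}. If the complete multipartite non-isolated part has at most $2s+1$ vertices, we are in the second family. Otherwise Proposition \ref{propp} forces a part whose order is at least (the number of non-isolated vertices) $-\,s$, and I would check that each isolated vertex may be absorbed into this dominant part without creating $K_{r+1}$ (the number of parts is unchanged) or $M_{s+1}$ (only the largest part grows, so the matching number stays at most $s$), producing an $n$-vertex complete $\le r$-partite graph with a part of order at least $n-s$. Taking whichever of the two resulting graphs contains more copies of $H$ yields $\cN(H,K)$. The main obstacle is the exactness demanded in the third paragraph: since the statement carries no error term, the coexistence of a nontrivial $A_1$ with attached singletons must be excluded \emph{exactly}, not merely up to lower-order terms, so the convexity/concentration comparison together with the finite clean-up must be executed while keeping the matching number tightly controlled.
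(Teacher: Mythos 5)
Your lower bound and the reduction via Proposition \ref{propi} (including $|A_1\cup B|\le 2s+1$) match the paper, but the heart of your argument has a genuine gap, and you in fact flag it yourself. The convexity/corner comparison cannot exclude the mixed configuration \emph{exactly}: the ``dominant term'' $\cN(H_0,G[B])\binom{|I|}{\alpha(H)}$ may simply vanish (e.g.\ $H=K_{s+1}$, where the extremal graph is $K_{2s+1}$ plus isolated vertices, or more generally whenever $B$ contains useless vertices, a phenomenon the paper analyzes around Proposition \ref{usi}), and then an $O(1)$-versus-main-term comparison decides nothing. The paper's mechanism is different and is what makes exactness work: it splits on whether \emph{every} vertex of $G$ lies in a copy of $H$. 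If yes, an edge $uv$ inside $A_1$ is killed by an exchange, not by convexity: delete all edges inside $A_1$ (losing only $O(1)$ copies, since any copy of $H$ through such an edge stays in $A_1\cup B$), then rewire $u$ to the neighborhood of a vertex outside $A_1\cup B$ and $v$ to the neighborhood of a vertex $w'\in B$ lying in $\Omega(n)$ copies of $H$; $M_{s+1}$-freeness is certified by the new Berge--Tutte set $B\cup\{v\}$ and $K_{r+1}$-freeness by a replacement argument, so the net gain $\Omega(n)-O(1)>0$ contradicts extremality. If some vertex lies in no copy of $H$, a similar rewiring shows no vertex outside $A_1\cup B$ is in any copy, so all edges leaving $A_1\cup B$ can be deleted. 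Your proposal contains neither the dichotomy nor the linear-gain exchange, and without them the coexistence of a nontrivial $A_1$ with attached singletons is not excluded.

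A second missing idea sits inside your ``$O(1)$-vertex clean-up then turns $G$ into a genuine complete $\le r$-partite graph.'' This is not a routine finite optimization: a $K_{r+1}$-free graph on at most $2s+1$ vertices can have chromatic number much larger than $r$, so one cannot simply complete it to a complete multipartite graph with at most $r$ parts without possibly creating $K_{r+1}$. The paper must first prove $\chi(G_1)\le r$, and does so via the strong perfect graph theorem, ruling out induced odd holes and antiholes using the twin-class structure of the symmetrized graph (such a subgraph meets each blown-up class at most once, hence induces cliques in both $B$ and $A_1$, forcing it to be too small); only then does it merge color classes into a complete multipartite graph and, when $|A_1\cup B|>2s+1$, absorb the isolated vertices into the part of order at least $|A_1\cup B|-s$ supplied by Proposition \ref{propp}. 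Your use of Proposition \ref{propp} and the final absorption step are right, but the two ingredients above --- the every-vertex-in-a-copy dichotomy with the $\Omega(n)$-gain exchange, and the perfectness argument giving $\chi\le r$ --- are exactly what your outline is missing.
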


Note that isolated vertices may come into picture where $\ex(n,H,M_{s+1})=O(1)$, this is the case for example if $H=K_{s+1}$ by the result of Wang \cite{wang} mentioned earlier. If $r+1\ge 2s+2$, then forbidding $K_r$ does not change anything and the extremal graph is $K_{2s+1}$ with $n-2s-1$ isolated vertices. 

\begin{proof}
    First we apply Proposition \ref{propi} and we show that there is an extremal graph $G$ where $A_1$ is also complete multipartite. To do so, we apply Zykov symmetrization inside $A_1$. Obviously, after any symmetrization steps inside $A_1$, there are still at most $|B|$ independent edges incident to vertices in $B$ and at most $\lfloor |A_1|/2\rfloor$ independent edges inside $A_1$, thus we do not create $M_{s+1}$. The process terminates  as in Proposition \ref{propi}: we can pick the order of the symmetrization steps such that the number of different neighborhoods decreases after some steps.

    Assume first that there is an edge $uv$ in $A_1$. Observe that any copy of $H$ containing this edge is inside $A_1\cup B$. Indeed, a vertex $w$ outside $A_1\cup B$ is not adjacent to $u$, thus $w$ has to be in the part of $u$ in the copy of $H$. Similarly $w$ has to be in the part of $v$, but then $uv$ cannot be an edge in the copy of $H$, a contradiction. 

    Let $G'$ be the graph we obtain by deleting all the edges inside $A_1$, then $\cN(H,G)= \cN(H,G')+O(1)$. Let us delete now all the edges incident to $u$ and $v$, connect $u$ to the neighbors of an arbitrary vertex $w\not\in A_1\cup B$, and then connect $v$ to each neighbor if a vertex $w'\in B$. The set $B\cup\{v\}$ cuts the resulting graph $G''$ to parts $A_1,\dots,A_m,\{u\}$ of odd order, which satisfy the properties in the Berge-Tutte theorem, thus $G''$ is $M_{s+1}$-free. If there is a copy of $K_{r+1}$ in $G''$, it contains either $u$ or $v$, since each edge of $G''$ that is not present in $G$ is incident to at least one of these two vertices. If $u$ is contained in this copy, the rest of the vertices are in $B$, and $u$ could be replaced by $w$ to obtain a copy of $K_{r+1}$ in $G$. If $v$ is contained in that copy, then it similarly can be replaced by $w'$. The same argument works if both $u$ and $v$ are in the copy.

    We obtained that $G''$ is an $n$-vertex $\{F,M_{s+1}\}$-free graph. Assume first that each vertex of $G$ is contained in a copy of $H$. Then there is a vertex of $B$ that is contained in $\Omega(n)$ copies of $H$, we pick that as $w'$, hence $\cN(H,G'')=\cN(H,G')+\Omega(n)>\cN(H,G)$, a contradiction. 
    
    This means that if each vertex of $G$ is contained in a copy of $H$, then there are no edges in $A_1$. If $B$ has less than $r$ parts, then we can add all the edges between $B$ and $V(G)\setminus B$ without creating a forbidden graph. The resulting graph is complete multipartite with a part of order at least $n-s$, as requested. If $B$ has $r$ parts, then for each vertex $v$ outside $B$, there is a part such that $v$ is not adjacent to any vertex of that part. Then we can join $v$ to each vertex of $B$ outside that part. Repeating this for each vertex outside $B$, we obtain a complete $r$-partite graph. Proposition \ref{propp} shows that either there is a part of order at least $n-s$, or there are at most $2s+1$ vertices.

    Assume now that a vertex $v$ of $G$ is not contained in any copy of $H$. Then we can delete all the edges incident to $v$ to obtain $G_0$. If there is a vertex $u$ outside $A_1\cup B$ that is contained in a copy of $H$, then we can connect $v$ to the neighbors of $u$. The resulting graph $G_0$ is clearly $K_{r+1}$-free and also $M_{s+1}$-free because $B$ still satisfies the properties of the Berge-Tutte theorem. On the other hand $\cN(H,G_0)>\cN(H,G)$, a contradiction. We obtained that no vertex outside $A_1\cup B$ is in any copy of $H$, thus we can delete the edges that are not inside $A_1\cup B$ to obtain $G_1$ such that $\cN(H,G_1)=\cN(H,G)$.

    Now we claim that $G_1$ has chromatic number at most $r$. This follows from e.g., the strong perfect graph theorem \cite{crst}. It states that a graph $G$ has chromatic number equal to the order of the largest clique if and only if $G$ contains neither an induced odd cycle of length at least 5, nor the component of such a cycle. 

Assume that $G_1$ contains such a subgraph $C$. Then $C$ does not contain vertices with the same neighborhood, thus contains at most one vertex from each blown-up class. Therefore, $C$ contains a clique from $B$ and from $A_1$. If $C$ is a cycle, then the cliques are of order at most 2, while if $C$ is the complement of a cycle, then the cliques are of order at most $(|V(C)|-1)/2$. In both cases, there are less than $|V(C)|$ vertices in $C$, a contradiction. We remark that with a little bit more effort one could show that $G$ has chromatic number at most $r$ as well, but it is not needed for us.

Consider a proper coloring of $G_1[A_1\cup B]$ with the fewest possible colors and add all the edges between vertices in different color classes, and then add back the vertices outside $A_1\cup B$, to obtain $G_2$. Obviously we did not add any edge inside $B$, thus the resulting graph is still $M_{s+1}$-free and $K_{r+1}$-free. The number of copies of $H$ did not decrease, thus $G_2$ contains $\ex(n,H,\{M_{s+1},K_{r+1}\})$ copies of $H$. If $|A_1\cup B|\le 2s+1$, then $G_2$ satisfies the properties listed in the theorem and we are done. Otherwise there is a part of order at least $|A\cup B|-s$, in which case we can add the isolated vertices to that part without creating a forbidden graph. Then the resulting graph satisfies the properties listed in the theorem.
\end{proof}

\bigskip

\textbf{Funding}: Research supported by the National Research, Development and Innovation Office - NKFIH under the grants KH 130371, SNN 129364, FK 132060, and KKP-133819.

\end{document}